\documentclass[final,hidelinks,onefignum,onetabnum]{siamart220329}

\usepackage{amscd,amssymb, amsmath,amsfonts, wasysym, mathrsfs, enumerate, mathtools,hhline,xcolor}%
\usepackage{graphicx}
\usepackage[all, cmtip]{xy}
\usepackage{multirow}

\definecolor{hot}{RGB}{65,105,225}
\usepackage{algorithm}
\usepackage[noend]{algpseudocode}

\usepackage{hyperref}
\hypersetup{
	plainpages=false,
    colorlinks,
    linktocpage=true,
    linkcolor=hot,
    citecolor=hot,
    urlcolor=hot
}
\usepackage[hyperpageref]{backref}

\usepackage[colorinlistoftodos,prependcaption,textsize=footnotesize]{todonotes}

\makeatletter
\@mparswitchfalse%
\makeatother
\normalmarginpar %

\newtheorem{remark}[theorem]{Remark}
\newtheorem{assumption}[theorem]{Assumption}

\begin{document}

\title{Domain decomposition methods with Physics-informed neural networks for elliptic equations on manifolds}

\author{Yufang Jiang \thanks{School of Mathematical Sciences, Nanjing Normal University, Nanjing Jiangsu, China (\email{240902008@njnu.edu.cn}).}
\and Lizhen Qin \thanks{School of Mathematics, Nanjing University, Nanjing, Jiangsu, China
  (\email{qinlz@nju.edu.cn}).}
\and Feng Wang \thanks{School of Mathematical Sciences, Nanjing Normal University, Nanjing Jiangsu, China (\email{fwang@njnu.edu.cn}).}
}

\maketitle

\headers{DDMs with PINNs on Manifolds}{Y. Jiang, L. Qin, and F. Wang}

\begin{abstract}
We propose two numerical domain decomposition methods (DDMs) for elliptic equations on compact Riemannian manifolds, based on physics-informed neural networks (PINNs). Our approach incorporates the DDM technique for manifolds with the advantages of neural networks in high-dimensional settings. The proposed methods are validated through numerical experiments on various manifolds, both with and without boundary, in dimensions ranging from $5$ to $10$.
\end{abstract}

\begin{AMS}{Primary 65N55; Secondary 58J05, 68T07.}
\end{AMS}

\begin{keywords}{Riemannian manifolds, elliptic problems, domain decomposition methods, physics-informed neural networks}
\end{keywords}

\section{Introduction}\label{sec_introduction}
Elliptic equations on Riemannian manifolds are important in both analysis and geometry (see e.g.~\cite{Aubin,jost,schoen_yau}).
These equations appear in many areas, such as image processing, multifluid dynamics, micromagnetics, and theoretical physics (see e.g.~\cite{bachini2021intrinsic,Barrera_Kolev_2023,bonito2020divergence,dobrev2010surface,Holst2018,Holst2016Wave,Holst_Stern,jankuhn2021error,jin2021gradient,mohamed2005finite,reuter2009discrete}). A simple and important example is
\begin{equation}
\label{eqn_laplace_problem}
- \Delta u + bu = f.
\end{equation}
Here $\Delta$ is the Laplace-Beltrami operator, or Laplacian for brevity, defined on a $d$-dimensional Riemannian manifold $M$. Many manifolds of interest arise naturally as submanifolds of Euclidean spaces, where the ``dimension'' of a submanifold is understood as the topological dimension of the manifold, not that of the ambient Euclidean space. For instance, the unit sphere
\[
S^{n-1} = \{ x \in \mathbb{R}^{n} \mid \| x \| =1 \}
\]
in $\mathbb{R}^{n}$ has dimension $n-1$.

When the manifold $M$ is a two-dimensional Riemannian submanifold in $\mathbb{R}^{3}$, i.e. a surface, the numerical methods to solve differential equations, particularly (\ref{eqn_laplace_problem}), on $M$ have been extensively studied over many years (see e.g.~\cite{baumgardner_frederickson,dziuk88,dziuk91,nedelec,nedelec_planchard}). A conventional and popular approach is to solve the equations by finite element methods (FEMs) based on a global grid or triangulation of $M$. Such a grid can be obtained by polyhedron approximation in $\mathbb{R}^{3}$. This approach has been highly developed and widely applied (see e.g.~\cite{BDN20,DDE05,dziuk_elliott} for surveys and bibliographies).

However, as pointed out in \cite{cao_qin} and \cite{qin_wang_wang}, the above approach on surfaces faces substantial obstacles when extended to general high-dimensional manifolds, due to their great topological and geometrical complexity. Moreover, many important manifolds are even not defined as submanifolds of Euclidean spaces. A case in point is the complex projective space $\mathbb{CP}^{k}$ which is foundational to algebraic geometry. Such manifolds pose formidable challenges in numerical computation, particularly in the construction of global triangulations.

To circumvent this difficulty, Qin-Zhang-Zhang in \cite{qin_zhang_zhang} proposed an idea to solve elliptic equations on manifolds using FEMs without global triangulations. Since a $d$-dimensional manifold $M$ has local coordinate charts by definition, $M$ can be decomposed into finitely many subdomains that carry local coordinates. Consequently, an elliptic equation on each subdomain can be transformed to one on a domain in $\mathbb{R}^{d}$. Thus an elliptic problem on $M$ can be solved by domain decomposition methods (DDMs) with subproblems posed on Euclidean domains, where grids are much easier to generate.

The idea of \cite{qin_zhang_zhang} was further developed by Cao-Qin in \cite{cao_qin} and by Qin-Wang-Wang in \cite{qin_wang_wang}. Those works merge the philosophy in \cite{qin_zhang_zhang} and the seminal works of P.~L.~Lions. To solve elliptic differential equations in Euclidean domains, Lions proposed a sequential DDM in \cite[Section~I.~4]{lions1} and a parallel DDM in \cite[p.~66]{lions2}. Both methods are overlapping DDMs formulated at continuous level, i.e.~they are not numerical methods. These two DDMs were adapted and generalized to manifolds in \cite{cao_qin} and \cite{qin_wang_wang}, respectively. Furthermore, \cite{cao_qin} and \cite{qin_wang_wang} also proposed DDMs based on FEMs on manifolds which are discrete analogues of the aforementioned continuous DDMs. Numerical experiments demonstrated that these numerical DDMs perform well on various $4$-dimensional manifolds with and without boundary. The present work further extends these previous studies by incorporating neural network techniques.

In fact, as the dimension increases, FEMs face significant challenges. Due to the curse of dimensionality, for a fixed mesh size, the size of the stiffness matrix grows exponentially with respect to the dimension (cf.~\cite{ciarlet}). Though the numerical methods in \cite{cao_qin} and \cite{qin_wang_wang} are theoretically applicable in any dimension given sufficient computing resources, they are only feasible in practice when the dimension is not too high.

To tackle problems on high-dimensional manifolds, in this paper, we combine the continuous DDMs in \cite{cao_qin} and \cite{qin_wang_wang} with the method of physics-informed neural networks (PINNs). Following \cite{cao_qin} and \cite{qin_wang_wang}, we convert a global elliptic problem on a general $d$-dimensional manifold $M$ into subproblems on a collection of domains in $\mathbb{R}^{d}$. Each subproblem is then solved using the PINNs method. In other words, we propose two overlapping DDMs, one sequential and one parallel, that employ PINNs at the numerical level for solving elliptic problems on manifolds. The PINNs method, popularized by \cite{RPK2019}, offers many advantages; we primarily exploit its ability to mitigate the curse of dimensionality. We test our methods on various manifolds, with and without boundary, in dimensions raging from $5$ to $10$. Numerical results indicate that these DDMs perform well.

Neural networks also have the merit of being mesh-free. Nonetheless, even though the difficulty of grid construction is eliminated, the framework of DDMs remains valuable for numerical computations on manifolds because coordinates are still indispensable for any numerical implementation. Since a general manifold $M$ does not necessarily admit a global coordinate chart, it's difficult to construct a global neural network on a complicated $M$, especially when $M$ is not defined as a submanifold of $\mathbb{R}^{n}$. A more practical strategy, following \cite{qin_zhang_zhang}, is to cover $M$ by a collection of subdomains $M_{i}$, each lying within a local coordinate chart. The global problem on $M$ can then be solved numerically using these local coordinate systems.

DDMs have a very long history, beginning with the alternating method invented by H.~A.~Schwarz \cite{schwarz}. The seminal works \cite{lions1,lions2,lions3} of P.~L.~Lions are natural and remarkable extensions of Schwarz's original idea. Since then, the field has grown significantly (see e.g.~\cite{dolean2015DDM,quarteroni_valli,smith_bjorstad_gropp,toselli_widlund}). In many later developments, DDM schemes, both overlapping and nonoverlapping, are used as preconditioners for globally discretized problems (see e.g.~\cite{AGJT2021,BPWX,1996CaiSaadOverlapping,dryja2013,xu92,xu_zou}); within this context, Lions' sequential and parallel DDMs are related to the multiplicative and additive Schwarz methods, respectively. Neural networks have also been under active development for decades (see e.g.~\cite{Barron,cybenko,Dissanayake_Phan-Thien,LLF1998,LLPS1993,Psichogios_Ungar}). In recent years, the field has seen rapid progress, and neural networks have emerged as a powerful and versatile numerical tool with broad impact across computational mathematics (see e.g.~\cite{Ainsworth_Dong,DeRyck_Mishra,E2017,EHJ2017,E_Yu2018,GYZhou2023,GWYZhou2022,HJE2018,HLXZ2020,HJKK2022,Li_Chen2026,LKALBSA2021,LLMDong2019,LZCChen2022,PLK2019,QKPW2020,RPK2019,WLLLXie2024,xu2020,ZBYZ2020}). For the numerical solution of differential equations (linear or nonlinear) on Euclidean domains, numerous studies have combined DDM frameworks with neural network approaches (see e.g.~\cite{DHMM2024,HKLW2021,KKKK2024,LKK2026,LXLL2023,Li_Xiang_Xu,SHMWang2025,sun_xu_yi_2024,sun_xu_yi_2025,TBCP2024}).

It is worth emphasizing that our work differs substantially from the earlier works on Euclidean-domain problems mentioned above. First, we apply DDMs to general high-dimensional manifolds. Particularly, we avoid constructing a global neural network on the target manifold. Instead of employing preconditioner techniques that rely on global networks, we follow more closely Lions' original approach, which solves local subproblems exclusively. Furthermore, in our methods, data exchange between subdomains is carried out via coordinate transition functions, reflecting a fundamental feature of manifolds.

The outline of this paper is as follows. In Section \ref{sec_continuous}, we introduce the model problem and recall the continuous-level DDMs proposed in \cite{cao_qin} and \cite{qin_wang_wang}. In Section \ref{sec_scheme}, we propose numerical DDMs based on PINNs which are numerical analogues of the continuous ones. Finally, some numerical results are reported in Section \ref{sec_experiment}.

\section{Theory on Continuous Problems}\label{sec_continuous}
In this section, we shall first formulate an elliptic model problem of second order on manifolds. Then we recall the domain decomposition methods (DDMs) developed in \cite{cao_qin} and \cite{qin_wang_wang} to solve the model problem (see Algorithms \ref{alg_continuous_sequential} and \ref{alg_continuous_parallel} below). These DDMs originated from the seminal works \cite{lions1,lions2} of P.~L.~Lions to solve differential equations in domains in Euclidean spaces. The iterative procedures also motivate us to propose numerical procedures based on physics-informed neural networks (PINNs) (Algorithms \ref{alg_numerical_sequential} and \ref{alg_numerical_parallel}) in $\S$\ref{sec_scheme}, which provide numerical approximations to the solution to the model problem.

\subsection{Model Problem}
Let $M$ be a $d$-dimensional compact smooth manifold without or with boundary $\partial M$. Equip $M$ with a Riemannian metric $g$. With respect to $g$, the Laplace operator $\Delta$, also named the Laplace-Beltrami operator, is defined on $M$. Note that neither $g$ nor $\Delta$ can be expressed by coordinates globally in general because $M$ does not necessarily have a global coordinate chart. In a local chart with coordinates $(x_{1}, \dots, x_{d})$, the Riemannian metric tensor $g$ is expressed as
\begin{equation}\label{eqn_metric}
g= \sum_{\alpha, \beta=1}^{d} g_{\alpha \beta} \mathrm{d} x_{\alpha} \otimes \mathrm{d} x_{\beta},
\end{equation}
where the matrix $(g_{\alpha \beta})_{d \times d}$ is symmetric and positive definite. The Laplace operator $\Delta$ is expressed as
\[
\Delta u = \frac{1}{\sqrt{G}}\sum_{\alpha=1}^{d}\frac{\partial}{\partial x_{\alpha}} \left( \sum_{\beta=1}^{d}g^{\alpha \beta} \sqrt{G} \frac{\partial u}{\partial x_{\beta}} \right),
\]
where $G = \det \left( (g_{\alpha \beta})_{d \times d} \right)$ is the determinant of the matrix $(g_{\alpha \beta})_{d \times d}$ and $(g^{\alpha \beta})_{d \times d}$ is the inverse of $(g_{\alpha \beta})_{d \times d}$. It's well-known that $\Delta$ is an elliptic differential operator of second order.

We consider the following model problem on $M$
\begin{equation}\label{eqn_problem}
\left\{
\begin{aligned}
Lu= - \Delta u + bu & = f, \\
u|_{\partial M} & = \varphi,
\end{aligned}
\right.
\end{equation}
where $b \geq 0$ is a constant. It suffices to solve \eqref{eqn_problem} on each component of $M$. Therefore, without loss of generality, $M$ is assumed to be connected. Note that, if $\partial M = \emptyset$, the boundary condition $u|_{\partial M} = \varphi$ is vacuously satisfied. However, to guarantee that \eqref{eqn_problem} is well-posed, we need to further assume $b>0$ if $\partial M = \emptyset$.

In a local chart, the operator $L$ in \eqref{eqn_problem} can be expressed via coordinates as
\begin{align}\label{eqn_differential_operator}
    L u & = -\frac{1}{\sqrt{G}} \sum_{\alpha=1}^{d} \frac{\partial}{\partial x_{\alpha}} \left( \sum_{\beta=1}^{d} g^{\alpha \beta} \sqrt{G} \frac{\partial u}{\partial x_{\beta}} \right) +bu \nonumber \\
    & = -\sum_{\alpha,\beta=1}^{d} g^{\alpha \beta}  \frac{\partial^{2} u}{\partial x_{\alpha} \partial x_{\beta}} - \frac{1}{\sqrt{G}} \sum_{\beta=1}^{d} \sum_{\alpha=1}^{d} \frac{\partial}{\partial x_{\alpha}} \left( g^{\alpha \beta} \sqrt{G} \right) \frac{\partial u}{\partial x_{\beta}} +bu.
\end{align}
This is an elliptic operator with $C^{\infty}$ coefficients and is in a quite general form since the Riemannian metric $g$ can be arbitrary.

We assume that the problem \eqref{eqn_problem} is well-posed, i.e.~it has a unique solution $u$. The well-posedness is granted by the ellipticity of \eqref{eqn_problem} provided that both $f$ and $\varphi$ have a certain mild regularity (cf.~\cite[Chapters~6~\&~8]{Gilbarg_Trudinger} and \cite[Chapter~3]{Aubin}).

\subsection{Continuous Algorithms}\label{subsec_continuous_algorithm}
We describe domain decomposition methods (DDMs) to solve (\ref{eqn_problem}). Suppose $M$ is decomposed into $m$ subdomains with $m>1$, i.e.~$M = \bigcup_{i=1}^{m} M_{i}$. Here $M_{i}$ is a closed subdomain (submanifold with codimension $0$) of $M$. Let $\gamma_{i} = \partial M_{i} \setminus \partial M$. We see $\overline{\gamma_{i}} \subseteq \partial M_{i}$, and $M_{i} \setminus \overline{\gamma_{i}}$ is the interior of $M_{i}$ in the sense of point set topology. If $M_{i} \cap \partial M = \emptyset$, particularly $\partial M = \emptyset$, then $\overline{\gamma_{i}} = \gamma_{i} = \partial M_{i}$. We make the following assumption on decompositions.

\begin{assumption}\label{asp_decomposition}
Suppose $M = \bigcup_{i=1}^{m} (M_{i} \setminus \overline{\gamma_{i}})$, and each $\partial M_{i}$ is Lipschitz.
\end{assumption}

By Assumption \ref{asp_decomposition}, these $M_{i} \setminus \overline{\gamma_{i}}$ form an open cover of $M$. So our decomposition is overlapping. We firstly formulate a sequential DDM, i.e.~Algorithm \ref{alg_continuous_sequential}. It was originally proposed by P.~L.~Lions in \cite[I.~4]{lions1} to solve equations in Euclidean domains, in which the classical Schwarz Alternating Method is extended to an iterative procedure with many subdomains. Later, this method was generalized by \cite[Algorithm~2.1]{cao_qin} to solve equations on manifolds.

\begin{algorithm}[ht]
\caption{(\cite{lions1,cao_qin})~Sequential DDM to solve \eqref{eqn_problem}, under Assumption \ref{asp_decomposition}.}
\label{alg_continuous_sequential}

\begin{algorithmic}[1]
\State%
Choose an initial guess function $u^{0}$ on $M$ with $u^{0}|_{\partial M} = \varphi$.

\State%
For each $n>0$,
\begin{quote}
define $u^{n}_{0} = u^{n-1}$.

For $1 \leq i \leq m$,
\begin{quote}
(assuming $u^{n}_{j}$ has been obtained for all $j<i$,) find a function $u^{n}_{i}$ on $M$ such that
\[
\left\{
\begin{aligned}
L u^{n}_{i} & = f, & \text{in $M_{i} \setminus \partial M_{i}$;}
\\
u^{n}_{i} & = u^{n}_{i-1}, & \text{otherwise.}
\end{aligned}
\right.
\]
\end{quote}
\end{quote}

Let $u^{n} = u^{n}_{m}$.

\end{algorithmic}
\end{algorithm}

Lions described Algorithm \ref{alg_continuous_sequential} as ``sequential" (see the sentence right before (26) in \cite{lions1}), which means the subproblems on subdomains $M_{i}$ have to be solved one by one sequentially. On the other hand, Lions later invented in \cite[p.~66]{lions2} another DDM to solve equations in Euclidean domains. The later one was claimed as ``parallel" (see a few sentences before (37) in \cite{lions2}). This parallel procedure was adapted and generalized to solve equations on manifolds in \cite[Algorithm~2.1]{qin_wang_wang}.

The adaption in \cite{qin_wang_wang} needs a partition of unity $\{ \rho_{i} \mid 1 \leq i \leq m \}$ on $M$. In other words, $\rho_{i}$'s are a family of nonnegative functions on $M$ and $\sum_{i=1}^{m} \rho_{i} = 1$.

\begin{assumption}\label{asp_partition}
The partition of unity  $\{ \rho_{i} \mid 1 \leq i \leq m \}$ is subordinate to the decomposition in Assumption \ref{asp_decomposition}, i.e.~$\mathrm{supp} \rho_{i} \subset M_{i} \setminus \overline{\gamma_{i}}$, where $\mathrm{supp} \rho_{i}$ is the support of $\rho_{i}$. Furthermore, each $\rho_{i}$ is continuous.
\end{assumption}

Let's recall the parallel DDM in \cite{qin_wang_wang}, i.e.~Algorithm \ref{alg_continuous_parallel}.
\begin{algorithm}
\caption{(\cite{qin_wang_wang})~Parallel DDM to solve \eqref{eqn_problem}, under Assumptions \ref{asp_decomposition} and \ref{asp_partition}.}
\label{alg_continuous_parallel}

\begin{algorithmic}[1]
\State%
Choose an initial guess function $u^{0}$ on $M$ with $u^{0}|_{\partial M} = \varphi$.

\State%
For each $n>0$,
\begin{quote}
for $1 \leq i \leq m$,
\begin{quote}
find a function $u^{n}_{i}$ on $M_{i}$ such that
\[
\left\{
\begin{aligned}
L u^{n}_{i}  & = f, & \text{in $M_{i} \setminus \partial M_{i}$;}
\\
u^{n}_{i}  & = u^{n-1}, & \text{otherwise.}
\end{aligned}
\right.
\]
\end{quote}
\end{quote}

Let $u^{n} = \sum_{i=1}^{m} \rho_{i} u^{n}_{i}$.
\end{algorithmic}
\end{algorithm}

There are two obvious differences between the above two algorithms. First, unlike Algorithm \ref{alg_continuous_sequential}, the parallel Algorithm \ref{alg_continuous_parallel} solves the subproblems on subdomains $M_{i}$ independently and parallelly at each iterative step $n$. Second, each $u^{n}_{i}$ in Algorithm \ref{alg_continuous_sequential} is globally defined on $M$, but each $u^{n}_{i}$ in Algorithm \ref{alg_continuous_parallel} is only defined on $M_{i}$. Note that the definition of $u^{n}$ in Algorithm \ref{alg_continuous_parallel} does make sense because $\mathrm{supp} \rho_{i} \subset M_{i}$ for each $i$. (More precisely, the function $\rho_{i} u^{n}_{i}$ on $M_{i}$ is extended as $0$ outside of $M_{i}$.)

Algorithm \ref{alg_continuous_parallel} certainly follows \cite[p.~66]{lions2} in large. However, it is necessary to point out that, even in the special case where the manifold $M$ is a Euclidean domain, Algorithm \ref{alg_continuous_parallel} is different from \cite{lions2}. Actually, Algorithm \ref{alg_continuous_parallel} employs a partition of unity which was not utilized in \cite{lions2}. (For a detailed description of this essential difference, see \cite[p.~A920]{qin_wang_wang}.) Due to the difference, it has been proved in \cite{qin_wang_wang} that Algorithm \ref{alg_continuous_parallel} has a better convergence theory (see \cite[Remark~2.5]{qin_wang_wang}).

If $M$ is decomposed into two overlapping subdomains, i.e.~$M = \bigcup_{i=1}^{2} (M_{i} \setminus \overline{\gamma_{i}})$, Algorithms \ref{alg_continuous_sequential} and \ref{alg_continuous_parallel} are both reduced to the famous Schwarz Alternating Method which was invented by H.~A.~Schwarz (\cite{schwarz}) to solve differential equations in Euclidean domains. The following Algorithm \ref{alg_continuous_alternating} is the Schwarz Alternating Method on manifolds.

\begin{algorithm}[ht]
\caption{Schwarz Alternating Method to solve \eqref{eqn_problem}, under Assumption \ref{asp_decomposition} with $m=2$.}
\label{alg_continuous_alternating}

\begin{algorithmic}[1]
\State%
Choose an initial guess functions $u^{0}_{2}$ on $M_{2}$ with $u^{0}_{2}|_{\partial M} = \varphi$.

\State%
For each $n>0$,
\begin{quote}
find a function $u^{n}_{1}$ on $M_{1}$ such that
\[
\left\{
\begin{aligned}
L u^{n}_{1} & = f, & \text{in $M_{1} \setminus \partial M_{1}$;}
\\
u^{n}_{1} & = u^{n-1}_{2}, & \text{on $\gamma_{1}$;}
\\
u^{n}_{1} & = \varphi, & \text{otherwise.}
\end{aligned}
\right.
\]
Find a function $u^{n}_{2}$ on $M_{2}$ such that
\[
\left\{
\begin{aligned}
L u^{n}_{2} & = f, & \text{in $M_{2} \setminus \partial M_{2}$;}
\\
u^{n}_{2} & = u^{n}_{1}, & \text{on $\gamma_{2}$;}
\\
u^{n}_{2} & = \varphi, & \text{otherwise.}
\end{aligned}
\right.
\]
\end{quote}
\end{algorithmic}
\end{algorithm}

In Algorithm \ref{alg_continuous_alternating}, one can extend $u^{n}_{1}$ and $u^{n}_{2}$ to be functions on $M$ by defining $u^{n}_{1}|_{M \setminus M_{1}} = u^{n-1}_{2}$ and $u^{n}_{2}|_{M \setminus M_{2}} = u^{n}_{1}$. Therefore, Algorithm \ref{alg_continuous_alternating} is clearly a special case of Algorithm \ref{alg_continuous_sequential} since the alternating procedure is sequential in nature.

We point out that, Algorithm \ref{alg_continuous_parallel} also degenerates to Algorithm \ref{alg_continuous_alternating} for a two-subdomains decomposition. First of all, now a partition of unity is superfluous to the updating of boundary values. We necessarily have $\rho_{i}|_{\overline{\gamma_{i}}} \equiv 0$ and $\rho_{i}|_{\overline{\gamma_{3-i}}} \equiv 1$ for $i=1,2$. In Algorithm \ref{alg_continuous_parallel}, therefore, $u^{n}|_{\gamma_{1}} = u^{n}_{2}$ and $u^{n}|_{\gamma_{2}} = u^{n}_{1}$. Second, setting $u^{0}_{i} = u^{0}|_{M_{i}}$, Algorithm \ref{alg_continuous_parallel} then splits into two independent iterative processes:
\[
u^{0}_{2}, u^{1}_{1}, u^{2}_{2}, u^{3}_{1}, u^{4}_{2}, u^{5}_{1}, \dots
\]
and
\[
u^{0}_{1}, u^{1}_{2}, u^{2}_{1}, u^{3}_{2}, u^{4}_{1}, u^{5}_{2}, \dots
\]
Each process is a Schwarz alternating procedure. Either one of the two is sufficient for the purpose of approximation.

\subsection{Convergence Theory}
Algorithms \ref{alg_continuous_sequential} and \ref{alg_continuous_parallel} converge when $f$, $\varphi$, $u$ and $u^{0}$ have some mild regularities.

When $M$ is a Euclidean domain and $\varphi =0$ in \eqref{eqn_problem}, the $H^{1}$-convergence of Algorithm \ref{alg_continuous_sequential} had been proved by P.~L.~Lions in \cite[Theorem.~I.2]{lions1}. The proof was generalized to the case of manifolds in \cite[Theorem~2.1]{cao_qin}.
\begin{theorem}[\cite{lions1,cao_qin}~Convergence of Algorithm \ref{alg_continuous_sequential}]\label{thm_sequential_converge}
Under Assumption \ref{asp_decomposition}, suppose further $f \in H^{-1} (M)$ and $\varphi \in H^{\frac{1}{2}} (\partial M)$ in \eqref{eqn_problem}. Suppose $u^{0} \in H^{1} (M)$ in Algorithm \ref{alg_continuous_sequential}. Let $u^{n}$ be the one in Algorithm \ref{alg_continuous_sequential}. Then the following holds:
\begin{enumerate}[(1)]
\item For each $n>0$, $u^{n} \in H^{1} (M)$.

\item There exist constants $C_{0} >0$ and $C \in [0,1)$ such that
\[
\| u- u^{n} \|_{H^{1} (M)} \leq C_{0} C^{n} \| u- u^{0} \|_{H^{1} (M)},
\]
where $C_{0}$ and $C$ are independent of $u$, $u^{0}$ and $n$.
\end{enumerate}
\end{theorem}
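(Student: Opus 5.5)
We follow Lions' Hilbert-space argument for the multiplicative Schwarz method. The plan is to recast each substep of Algorithm \ref{alg_continuous_sequential} as an orthogonal projection in an energy inner product, and then obtain the contraction from a stable splitting of $H^{1}_{0}(M)$ subordinate to the cover.

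\textbf{Energy space.} Let $V = \{ v \in H^{1}(M) \mid v|_{\partial M} = 0 \}$; this is all of $H^{1}(M)$ when $\partial M = \emptyset$. Equip $V$ with the bilinear form
\[
a(v,w) = \int_{M} \big( g(\nabla v, \nabla w) + b\, v w \big)\, \mathrm{d}V_{g}.
\]
It is an inner product equivalent to the $H^{1}$ norm, by Poincar\'e's inequality when $\partial M \neq \emptyset$ and by $b>0$ when $\partial M = \emptyset$. Compactness of $M$ allows us to patch these inequalities from finitely many charts.

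\textbf{Well-posedness and part (1).} For each $i$, set $V_{i} = \{ v \in V \mid v = 0 \text{ on } M \setminus (M_{i}\setminus\partial M_{i}) \}$, which is isomorphic to $H^{1}_{0}(M_{i}\setminus\overline{\gamma_{i}})$ extended by zero. The Lipschitz condition in Assumption \ref{asp_decomposition} makes the traces and the zero extension well defined. The $i$-th substep asks for $u^{n}_{i} \in u^{n}_{i-1} + V_{i}$ with
\[
a(u^{n}_{i}, w) = \langle f, w\rangle \quad \text{for all } w \in V_{i}.
\]
By Lax--Milgram this has a unique solution in $H^{1}(M)$; the boundary value $\varphi$ is inherited from $u^{0}$, which we assume is given. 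This proves (1) by induction. Writing $e^{n}_{i} = u - u^{n}_{i}$, we have $a(u,w) = \langle f,w\rangle$ for all $w \in V_{i} \subset V$, and hence
\[
e^{n}_{i} = (I - P_{i}) e^{n}_{i-1},
\]
where $P_{i}$ is the $a$-orthogonal projection of $V$ onto $V_{i}$. Therefore $e^{n} = E e^{n-1}$ with $E = (I-P_{m})\cdots(I-P_{1})$, and it suffices to show $\|E\|_{a} < 1$. The constant $C_{0}$ then comes only from the equivalence of the $a$-norm with the $H^{1}$ norm.

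\textbf{The contraction, the main step.} Since each $I - P_{i}$ is an orthogonal projection, Pythagoras gives, for $v \in V$ with $v_{0}=v$ and $v_{i} = (I-P_{i})v_{i-1}$,
\[
\|v\|_{a}^{2} - \|Ev\|_{a}^{2} = \sum_{i} \|P_{i} v_{i-1}\|_{a}^{2}.
\]
So we need a lower bound $\sum_{i} \|P_{i}v_{i-1}\|_{a}^{2} \geq \delta \|v\|_{a}^{2}$. We will obtain it from a stable decomposition. Take a smooth partition of unity $\{\theta_{i}\}$ subordinate to the open cover $\{M_{i}\setminus\overline{\gamma_{i}}\}$, which exists by Assumption \ref{asp_decomposition} and compactness. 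Set $w_{i} = \theta_{i} v \in V_{i}$; then $\sum_{i} w_{i} = v$ and $\sum_{i} \|w_{i}\|_{a}^{2} \leq K \|v\|_{a}^{2}$, with $K$ depending on the $\sup|\nabla\theta_{i}|$ and on the norm equivalence. Now expand, using $v = v_{i-1} + \sum_{j<i} P_{j} v_{j-1}$:
\[
\|v\|_{a}^{2} = \sum_{i} a(v, w_{i}) = \sum_{i} a\Big( v_{i-1} + \sum_{j<i} P_{j}v_{j-1},\, w_{i} \Big).
\]
In the $i$-th term, $a(v_{i-1}, w_{i}) = a(P_{i}v_{i-1}, w_{i})$. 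Applying Cauchy--Schwarz then yields
\[
\|v\|_{a}^{2} \leq C'\Big( \sum_{i} \|P_{i}v_{i-1}\|_{a}^{2} \Big)^{1/2} \Big( \sum_{i} \|w_{i}\|_{a}^{2} \Big)^{1/2},
\]
with $C'$ depending on $m$. Combining with the bound on $\sum_{i}\|w_{i}\|_{a}^{2}$ gives $\sum_{i}\|P_{i}v_{i-1}\|_{a}^{2} \geq \|v\|_{a}^{2} / (C'^{2}K)$. Hence
\[
\|E\|_{a}^{2} \leq 1 - \frac{1}{C'^{2}K} =: C^{2} < 1,
\]
and $C$ is independent of $u$, $u^{0}$ and $n$.

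The delicate points are the following. First, $\theta_{i} v \in V_{i}$ requires $\operatorname{supp}\theta_{i}$ to stay away from $\gamma_{i}$ while $v$ may be nonzero on $\partial M \cap M_{i}$; this is fine because $v \in V$ vanishes on $\partial M$. Second, the estimate $\|\theta_{i} v\|_{a} \lesssim \|v\|_{a}$ needs the lower-order term, which is controlled through the Poincar\'e inequality, or through $b>0$ in the closed case.
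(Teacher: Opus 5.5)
Your proof is correct, but it takes a different route from the paper. The paper does not reprove the contraction. It quotes Cao--Qin (Lions' Theorem I.2 transplanted to manifolds) for $\varphi=0$. For general data it uses a lifting: it picks $v\in H^{1}(M)$ with $v|_{\partial M}=\varphi$ and notes that the $u^{n}-v$ are exactly the iterates of Algorithm~\ref{alg_continuous_sequential} for the homogeneous problem with right-hand side $f-Lv\in H^{-1}(M)$.

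You instead give a self-contained, quantitative version of the underlying Lions argument: error propagation $E=(I-P_{m})\cdots(I-P_{1})$, the Pythagoras telescoping identity, and the stable splitting $v=\sum_{i}\theta_{i}v$. You also make the lifting unnecessary by working with the errors $u-u^{n}_{i}$ directly. Every iterate carries the trace $\varphi$, and each correction lies in $V_{i}\subset H^{1}_{0}(M)$, so the error recursion is automatically homogeneous. The hypothesis $\varphi\in H^{1/2}(\partial M)$ then enters only through the existence of $u$ and of an admissible $u^{0}$.

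The paper's route buys brevity by deferring all the functional analysis to the literature. Yours buys an explicit rate: with $C'=m$ in your Cauchy--Schwarz step you get $C^{2}=1-1/(m^{2}K)$. This depends only on the number of subdomains, the partition-of-unity constant $K$ and the norm equivalence, and $C_{0}$ comes solely from the equivalence of $\|\cdot\|_{a}$ and $\|\cdot\|_{H^{1}(M)}$.
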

\begin{proof}
This has been proved in \cite[Theorem~2.1]{cao_qin} when $\varphi =0$. In general, since $\varphi \in H^{\frac{1}{2}} (\partial M)$, there exists a $v \in H^{1} (M)$ such that $v|_{\partial M} = \varphi$. Now $u-v$ solves the following problem with homogeneous boundary value
\begin{equation}\label{thm_sequential_converge_1}
\left\{
\begin{aligned}
L(u-v) & = f-Lv, \\
(u-v)|_{\partial M} & = 0,
\end{aligned}
\right.
\end{equation}
where $f-Lv \in H^{-1} (M)$. Applying Algorithm \ref{alg_continuous_sequential} to \eqref{thm_sequential_converge_1}, we obtain an iterative sequence $\{ u^{n} - v \}_{n=1}^{\infty}$. By \cite{cao_qin}, the desired conclusion holds for $\{ u^{n} - v \}_{n=1}^{\infty}$. Since
\[
u^{n} = (u^{n} -v) + v \quad \text{and} \quad  u- u^{n} = (u-v) -(u^{n} -v),
\]
the conclusion of this theorem follows.
\end{proof}

The following theorem was proved in \cite[Theorem~2.4]{qin_wang_wang}.
\begin{theorem}[\cite{qin_wang_wang}~Convergence of Algorithm \ref{alg_continuous_parallel}]\label{thm_parallel_converge}
Under Assumptions \ref{asp_decomposition} and \ref{asp_partition}, assume further all $\rho_{i}$'s are Lipschitz continuous. Suppose $\bigcup_{i \neq j} (\gamma_{i} \cap \gamma_{j})$ has a finite $(d-2)$-dimensional Hausdorff measure. Suppose $f \in H^{-1} (M)$, $u \in C^{0} (M) \cap H^{1} (M)$, and $\varphi \in C^{0} (\partial M) \cap H^{\frac{1}{2}} (\partial M)$. Suppose $u^{0} \in C^{0} (M) \cap H^{1} (M)$ in Algorithm \ref{alg_continuous_parallel}. In addition, $u^{0}$ and $u$ are assumed Lipschitz continuous if $\partial M \ne \emptyset$. Let $u^{n}$ be the one in Algorithm \ref{alg_continuous_parallel}. Then the following holds:
\begin{enumerate}[(1)]
\item For each $n>0$ and each $i$, $u^{n} \in C^{0} (M) \cap H^{1} (M)$ and $u^{n}_{i} \in C^{0} (M_{i}) \cap H^{1} (M_{i})$.

\item $\lim\limits_{n \rightarrow \infty} \| u^{n}_{i} - u \|_{C^{0}(M_{i})} =0$ and $\lim\limits_{n \rightarrow \infty} \| u^{n}_{i} - u \|_{H^{1}(M_{i})} =0$ for each $i$.

\item $\lim\limits_{n \rightarrow \infty} \| u^{n} - u \|_{C^{0}(M)} =0$ and $\lim\limits_{n \rightarrow \infty} \| u^{n} - u \|_{H^{1}(M)} =0$.
\end{enumerate}
\end{theorem}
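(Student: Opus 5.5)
The plan is a maximum-principle contraction argument in $C^{0}$, upgraded to $H^{1}$ by an energy estimate on slightly shrunken subdomains. Everything is reduced to the error equations. Set $e^{n}=u^{n}-u$ and $e^{n}_{i}=u^{n}_{i}-u|_{M_{i}}$. Then $Le^{n}_{i}=0$ in the interior of $M_{i}$, and $e^{n}_{i}=e^{n-1}$ on $\partial M_{i}$. In particular $e^{n}_{i}=0$ on $\partial M_{i}\cap\partial M$, because $u^{n-1}|_{\partial M}=\varphi$; this is preserved inductively since $\sum\rho_{i}=1$. Finally,
\[
e^{n}=\sum_{i}\rho_{i}e^{n}_{i}.
\]

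\textbf{Step 1 (well-posedness, part (1)).} We argue by induction on $n$. Suppose $e^{n-1}\in C^{0}(M)\cap H^{1}(M)$.
\begin{itemize}
\item Since $\partial M_{i}$ is Lipschitz, the trace of $e^{n-1}$ lies in $H^{1/2}(\partial M_{i})\cap C^{0}$. The local Dirichlet problem on $M_{i}$ is then uniquely solvable in $H^{1}(M_{i})$; uniqueness uses $b\ge0$ together with the nonempty boundary of $M_{i}$.
\item Continuity up to the boundary follows from Perron/Wiener regularity at Lipschitz boundary points (exterior cone condition). The coefficients are smooth in charts, so the local theory of \cite{Gilbarg_Trudinger} applies.
\item The product $\rho_{i}e^{n}_{i}$ is in $H^{1}$ because $\rho_{i}$ is Lipschitz. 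Since $\operatorname{supp}\rho_{i}\subset M_{i}\setminus\overline{\gamma_{i}}$, its zero extension lies in $C^{0}(M)\cap H^{1}(M)$.
\item The Lipschitz assumptions on $u^{0},u$ when $\partial M\neq\emptyset$ enter exactly here: they make the traces regular enough to keep the iterates in this class. I would simply track this.
\end{itemize}

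\textbf{Step 2 ($C^{0}$ contraction).} This is the key estimate. Let $h_{i}$ be the $L$-harmonic function on $M_{i}$ with $h_{i}=1$ on $\gamma_{i}$ and $h_{i}=0$ on $\partial M_{i}\cap\partial M$; it is well defined up to the measure-zero set $\overline{\gamma_{i}}\cap\partial M$. By the maximum principle,
\[
|e^{n}_{i}|\le \|e^{n-1}\|_{C^{0}}\,h_{i}\quad\text{(in the case $\partial M\neq\emptyset$)},
\]
and $h_{i}<1$ in the interior of $M_{i}$ by the strong maximum principle. In the case $\partial M=\emptyset$ with $b>0$, use instead $L1=b>0$. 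Setting
\[
\theta=\max_{i}\sup_{\operatorname{supp}\rho_{i}}h_{i},
\]
we get $\theta<1$ by compactness of $\operatorname{supp}\rho_{i}$, which sits inside the open set $M_{i}\setminus\overline{\gamma_{i}}$. Near $\partial M$ the bound $h_{i}<1$ still holds by continuity, since $h_{i}=0$ there. Because $\rho_{i}\ge0$ and $\sum\rho_{i}=1$,
\[
\|e^{n}\|_{C^{0}(M)}\le\theta\|e^{n-1}\|_{C^{0}(M)},
\]
so the $C^{0}$ errors decay geometrically. The same maximum principle bound gives $\|e^{n}_{i}\|_{C^{0}(M_{i})}\le\|e^{n-1}\|_{C^{0}}\to0$.

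\textbf{Step 3 ($H^{1}$ convergence, the main obstacle).} A $C^{0}$-small boundary datum does not by itself control $H^{1}$ norms up to $\partial M_{i}$, so I would proceed in three parts.
\begin{itemize}
\item \emph{Interior estimate.} Caccioppoli estimates give $\|e^{n}_{i}\|_{H^{1}(K)}\le C_{K}\|e^{n}_{i}\|_{C^{0}}\to0$ on any compact $K$ in the interior of $M_{i}$. Near $\partial M$ use the boundary Caccioppoli estimate, valid since $e^{n}_{i}=0$ there. Hence $e^{n}_{i}\to0$ in $H^{1}$ on a neighbourhood of $\operatorname{supp}\rho_{i}$.
\item \emph{Global error.} From $\nabla e^{n}=\sum(\rho_{i}\nabla e^{n}_{i}+e^{n}_{i}\nabla\rho_{i})$ and the Lipschitz bound on $\rho_{i}$, we get $\|e^{n}\|_{H^{1}(M)}\to0$; this is part (3).
\item \emph{Local errors, part (2).} Write $e^{n}_{i}=E_{i}(e^{n-1}|_{\partial M_{i}})$ and use $H^{1}$-stability of the Dirichlet solution operator with respect to an $H^{1}$ extension: $\|e^{n}_{i}\|_{H^{1}(M_{i})}\le C\|e^{n-1}\|_{H^{1}(M)}\to0$.
\end{itemize}
The delicate point is the first part of this step: making the boundary Caccioppoli argument uniform near the corners where $\gamma_{i}$ meets other $\gamma_{j}$ or $\partial M$. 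This is where I expect the hypothesis that $\bigcup_{i\ne j}(\gamma_{i}\cap\gamma_{j})$ has finite $(d-2)$-Hausdorff measure to be needed. I would first try cutoffs whose gradients concentrate near this set, using capacity-type arguments: a finite $\mathcal H^{d-2}$ set can be covered by balls with $\sum r^{d-2}$ bounded, and the $L^{\infty}$ smallness of the errors then makes the cutoff contribution vanish.
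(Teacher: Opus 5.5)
The paper does not prove this theorem; it quotes it from \cite{qin_wang_wang}, so your proposal has to stand on its own. Most of your architecture is sound:
\begin{itemize}
\item a maximum-principle estimate in $C^{0}$;
\item a Caccioppoli bound on a neighbourhood of $\mathrm{supp}\,\rho_{i}$, giving $\|e^{n}\|_{H^{1}(M)}\le C\max_{i}\|e^{n}_{i}\|_{C^{0}(M_{i})}$;
\item $H^{1}$-stability of the local Dirichlet problems for part (2).
\end{itemize}
However, Step 2, on which everything else rests, has a genuine gap.

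\textbf{The gap.} You assert that $h_{i}<1$ in the interior of $M_{i}$ by the strong maximum principle. This is false when $b=0$ (permitted whenever $\partial M\neq\emptyset$) and $M_{i}\cap\partial M=\emptyset$. In that case $\partial M_{i}=\gamma_{i}$, $h_{i}$ has boundary value $1$ everywhere, and $L1=0$, so $h_{i}\equiv1$ and $\theta=1$.

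This is not just a weak bound: a one-step $C^{0}$ contraction is actually false. Take the following data:
\begin{itemize}
\item $M$ the closed unit ball in $\mathbb{R}^{d}$ and $b=0$;
\item $M_{1}=\{|x|\le 0.6\}$ and $M_{2}=\{0.5\le|x|\le 1\}$;
\item $u^{0}$ with $0\le e^{0}\le 1$, $e^{0}=1$ on $\{|x|\le 0.8\}$, and $e^{0}=0$ on $\partial M$.
\end{itemize}
Then $e^{1}_{1}\equiv 1$ on $M_{1}$. Since $\rho_{1}(0)=1$, we get $e^{1}(0)=1$, hence $\|e^{1}\|_{C^{0}}=\|e^{0}\|_{C^{0}}$. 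Your Step 3 only converts $C^{0}$ decay into $H^{1}$ decay, so in this situation neither (2) nor (3) is proved.

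\textbf{A repair.} The constant barrier fails because $L1=b$ may vanish; replace it by a positive strict supersolution.
\begin{itemize}
\item Let $\psi$ solve $L\psi=1$ in $M$ with $\psi=1$ on $\partial M$ (take $\psi\equiv 1/b$ if $\partial M=\emptyset$). By the maximum principle $\psi>0$ on $M$.
\item Measure errors in the equivalent norm $\|e\|_{\psi}=\sup_{M}|e|/\psi$.
\item Let $\chi_{i}$ be continuous on $\partial M_{i}$ with $0\le\chi_{i}\le 1$, $\chi_{i}=1$ on $\overline{\gamma_{i}}$, and $\chi_{i}=0$ on $\partial M\cap\mathrm{supp}\,\rho_{i}$. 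This exists because these closed sets are disjoint. Let $\tilde h_{i}$ be $L$-harmonic in $M_{i}$ with boundary data $\chi_{i}\psi$.
\item Because $e^{n-1}=0$ on $\partial M$, comparison gives $|e^{n}_{i}|\le\|e^{n-1}\|_{\psi}\,\tilde h_{i}$.
\item Since $L(\psi-\tilde h_{i})=1>0$ with nonnegative boundary values, the strong maximum principle gives $\tilde h_{i}<\psi$ in the interior of $M_{i}$. Also $\tilde h_{i}=0$ on $\partial M\cap\mathrm{supp}\,\rho_{i}$.
\item By compactness and continuity, $\theta:=\max_{i}\max_{\mathrm{supp}\,\rho_{i}}\tilde h_{i}/\psi<1$.
\item Summing against $\rho_{i}$ gives $\|e^{n}\|_{\psi}\le\theta\|e^{n-1}\|_{\psi}$.
\end{itemize}
Alternatively, keep the constant barrier and prove an $m$-step contraction using the strong maximum principle together with connectedness of $M$. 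With either fix, your Steps 1 and 3 go through. This also avoids the Perron-versus-$H^{1}$ issue caused by your $h_{i}$ having a jump at $\overline{\gamma_{i}}\cap\partial M$.

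\textbf{On Step 3.} The corner difficulty you anticipate does not arise.
\begin{itemize}
\item $\mathrm{supp}\,\rho_{i}$ is a compact subset of the open set $M_{i}\setminus\overline{\gamma_{i}}$. So there is a Lipschitz cutoff $\eta$ equal to $1$ on it and vanishing near $\overline{\gamma_{i}}$.
\item Then $\eta^{2}e^{n}_{i}\in H^{1}_{0}$, and testing gives $\int\eta^{2}|\nabla e^{n}_{i}|^{2}\le 4\int|\nabla\eta|^{2}|e^{n}_{i}|^{2}$.
\item The sets $\gamma_{i}\cap\gamma_{j}$ and $\overline{\gamma_{i}}\cap\partial M$ never enter.
\end{itemize}
The capacity argument is therefore unnecessary. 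Your route uses neither the $(d-2)$-dimensional Hausdorff hypothesis nor the Lipschitz hypotheses on $u^{0}$ and $u$; contrary to your remark, nothing in Step 1 needs the latter. That is harmless, but you should not present it as an unresolved delicate point.
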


\begin{remark}
In \cite{cao_qin} and \cite{qin_wang_wang}, the problem \eqref{eqn_problem} was numerically solved by finite element methods (FEMs). Following the convention of FEMs, the $f$ in \eqref{eqn_problem} was assumed to have slightly higher regularity in \cite{cao_qin} and \cite{qin_wang_wang}, more precisely, $f \in L^{2} (M)$. Typically, FEMs result in a linear system $AX=b$, where $A$ is a stiffness matrix and $b$ comes from $f$. If $f \in L^{2} (M)$, then $b$ can be easily obtained via numerical integration. Nevertheless, the proofs of Theorems \ref{thm_sequential_converge} and \ref{thm_parallel_converge} go through even if we merely assume $f \in H^{-1} (M)$.
\end{remark}

\section{Numerical Scheme}\label{sec_scheme}
In this section, we propose numerical DDM iterative procedures (Algorithms \ref{alg_numerical_sequential} and \ref{alg_numerical_parallel} below) to obtain approximations to the solution of \eqref{eqn_problem}. They are numerical imitations of Algorithms \ref{alg_continuous_sequential} and \ref{alg_continuous_parallel} based on physics-informed neural networks (PINNs). The idea is as follow. First, $M$ is decomposed into overlapping subdomains $M_{i}$ ($1 \leq i \leq m$), and each $M_{i}$ is in a coordinate chart. Theoretically, we can apply Algorithms \ref{alg_continuous_sequential} and \ref{alg_continuous_parallel} by virtue of this decomposition. Second, since $M_{i}$ is in a coordinate chart, each subproblem in $M_{i}$ can be naturally converted to one in a domain $D_{i}$ in a Euclidean space. In practice, we may use PINNs method to solve problems in $D_{i}$ approximately. The combination of these two ingredients yields our numerical algorithms.

\subsection{Domain Decompositions}\label{subsec_decompose}
Let $M$ be a $d$-dimensional compact Riemannian manifold with or without boundary. Suppose $M$ is decomposed as Assumption \ref{asp_decomposition}, i.e.~$M = \bigcup_{i=1}^{m} (M_{i} \setminus \overline{\gamma_{i}})$, and each $\partial M_{i}$ is Lipschitz. We can further require that there is a smooth (i.e.~$C^{\infty}$) diffeomorphism $\phi_{i} \colon D_{i} \rightarrow M_{i}$ for each $i$, where $D_{i}$ is a domain in $\mathbb{R}^{d}$ with simple shape and Lipschitz boundary.

Theoretically, we can always get such triples $(M_{i}, D_{i}, \phi_{i})$'s. By the definition of a manifold, for each $\zeta \in M$, there is an open chart neighborhood $U_{\zeta}$ of $\zeta$, i.e.~there is a smooth diffeomorphism $\phi_{\zeta} \colon \Omega_{\zeta} \rightarrow U_{\zeta}$, where $\Omega_{\zeta}$ is an open subset of $\mathbb{R}^{d}_{+}$. Here $\mathbb{R}^{d}_{+}$ is a closed half Euclidean space
\[
\mathbb{R}^{d}_{+} = \{ (x_{1}, \dots, x_{d}) \in \mathbb{R}^{d} \mid x_{d} \ge 0 \}.
\]
If $\zeta \in M \setminus \partial M$, particularly $\partial M = \emptyset$, we can even require that $\Omega_{\zeta}$ is open in $\mathbb{R}^{d}$. Since $\Omega_{\zeta}$ is open in $\mathbb{R}^{d}_{+}$ and $\phi_{\zeta}^{-1} (\zeta) \in \Omega_{\zeta}$, we can choose a neighborhood $D_{\zeta}$ of $\phi_{\zeta}^{-1} (\zeta)$ in $\mathbb{R}^{d}_{+}$ such that $D_{\zeta} \subseteq \Omega_{\zeta}$ and $D_{\zeta}$ is a compact domain with simple shape and Lipschitz boundary. For instance, $D_{\zeta}$ can be chosen as ($d$-dimensional) cubes, balls and polydisks, among others. This yields a diffeomorphism $\phi_{\zeta} \colon D_{\zeta} \rightarrow M_{\zeta} \subset U_{\zeta}$, where $M_{\zeta}$ is a neighborhood of $\zeta$. The interiors of all such $M_{\zeta}$'s provide an open cover of $M$. Since it is compact, $M$ can be covered by the interiors of finitely many such $M_{\zeta}$'s. These finitely many $(M_\zeta, D_\zeta, \phi_{\zeta})$'s yield a desired decomposition of $M$. In the above, the interior of $M_{\zeta}$ is in the sense of point set topology. More precisely, let $\gamma_{\zeta} = M_{\zeta} \cap (M \setminus \partial M)$, then the interior of $M_{\zeta}$ is $M_{\zeta} \setminus \overline{\gamma_{\zeta}}$. Each $M_{\zeta} \setminus \overline{\gamma_{\zeta}}$ is an open subset of $M$. If $M_{\zeta} \cap \partial M = \emptyset$, particularly $\partial M = \emptyset$, then $\overline{\gamma_{\zeta}} = \partial M_{\zeta}$.

Suppose we have the desired triples $(M_{i}, D_{i}, \phi_{i})$'s for $M$. Since an elliptic problem in $M_{i}$ can be converted to one in $D_{i}$ via $\phi_{i}$, for the implement of Algorithms \ref{alg_continuous_sequential} and \ref{alg_continuous_parallel}, we merely need to solve problems in these $D_{i}$'s. However, to update the boundary values on the $\partial D_{i}$'s, transmission of data among different $D_{i}$'s is needed.

For $i \neq j$, let $D_{ij} = \phi_{i}^{-1} (M_{i} \cap M_{j}) \subseteq D_{i}$ and $D_{ji} = \phi_{j}^{-1} (M_{i} \cap M_{j}) \subseteq D_{j}$. (See Fig.~\ref{fig_decomposition} for an illustration, where the whale shaped object is a manifold $M$, the square and the disk stand for $D_{i}$ and $D_{j}$, respectively, and the shadowed parts of $D_{i}$ and $D_{j}$ are $D_{ij}$ and $D_{ji}$, respectively.) If $M_{i} \cap M_{j} \ne \emptyset$, then
\[
\phi_{j}^{-1} \circ \phi_{i} \colon \ D_{ij} \rightarrow D_{ji}
\]
is a diffeomorphism which is the transition of coordinates on the overlapping between $M_{i}$ and $M_{j}$. Suppose $\phi_{j}^{-1} \circ \phi_{i}$ maps some part of $\partial D_{i}$ into $D_{ji} \subseteq D_{j}$, a desired function value on $D_{j}$ is transported to $\partial D_{i}$ via this transition of coordinates.
\begin{figure}[htbp]
\centering
  \includegraphics[width=0.6\textwidth]{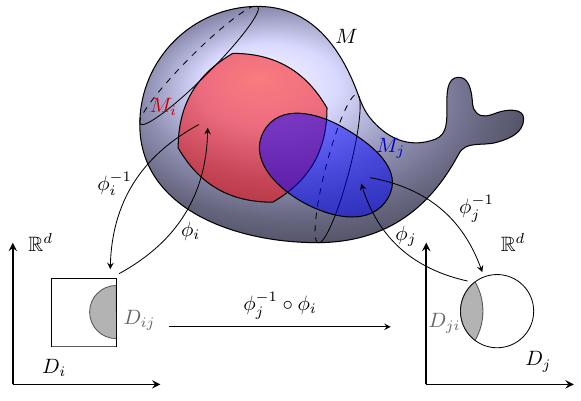}
  \caption{An illustration of domain decompositions.}
  \label{fig_decomposition}
\end{figure}

It's worth to noting that, as pointed in \cite[$\S$4]{cao_qin}, if manifolds $M$ and $M'$ already have desired decompositions, a good decomposition of the product manifold $M \times M'$ is automatically obtained. More precisely, suppose $M$ (resp.~$M'$) has a decomposition $\{ (M_{i}, D_{i}, \phi_{i}) \mid 1 \le i \le m \}$ (resp.~$\{ (M'_{i'}, D'_{i'}, \phi'_{i'}) \mid 1 \le i' \le m' \}$), then $M \times M'$ has the decomposition
\[
\{ (M_{i} \times M'_{i'}, D_{i} \times D'_{i'}, \phi_{i} \times \phi'_{i'}) \mid 1 \le i \le m, 1 \le i' \le m' \}.
\]
The transition of coordinates between $D_{i} \times D'_{i'}$ and $D_{j} \times D'_{j'}$ is
\[
(\phi_{j} \times \phi'_{j'})^{-1} \circ (\phi_{i} \times \phi'_{i'}) = (\phi_{j}^{-1} \circ \phi_{i}) \times ({\phi'}_{j'}^{-1} \circ \phi'_{i'}).
\]
This canonical technique of decomposition for product manifolds will be employed in our numerical test (see $\S$\ref{subsec_B3S3}).

\subsection{Numerical Algorithms}
Now we propose our numerical DDMs to solve \eqref{eqn_problem}. The following assumption is needed.
\begin{assumption}\label{asp_decomposition_numerical}
Suppose $M = \bigcup_{i=1}^{m} (M_{i} \setminus \overline{\gamma_{i}})$. There is a $C^{\infty}$ diffeomorphism $\phi_{i} \colon D_{i} \rightarrow M_{i}$ for each $i$, where $D_{i}$ is a compact domain in $\mathbb{R}^{d}$ with Lipschitz boundary.
\end{assumption}

By \eqref{eqn_differential_operator}, the restriction of \eqref{eqn_problem} to $M_{i}$ is transformed via $\phi_{i}$ to the following equation in $D_{i}$, where $\hat{u}_{i} = u \circ \phi_{i}$.
\begin{equation}\label{eqn_domain_operator}
\hat{L} \hat{u}_{i}:= -\sum_{\alpha,\beta=1}^{d} g^{\alpha \beta}  \frac{\partial^{2} \hat{u}_{i}}{\partial x_{\alpha} \partial x_{\beta}} - \frac{1}{\sqrt{G}} \sum_{\beta=1}^{d} \sum_{\alpha=1}^{d} \frac{\partial}{\partial x_{\alpha}} \left( g^{\alpha \beta} \sqrt{G} \right) \frac{\partial \hat{u}_{i}}{\partial x_{\beta}} +b \hat{u}_{i} = f \circ \phi_{i}.
\end{equation}

We shall solve \eqref{eqn_domain_operator} with suitable boundary values in $D_{i}$ by physics-informed neural networks (PINNs) method. The implementation of the PINNs method requires taking high-order derivatives of the neural networks functions. The activation functions, therefore, should possess $C^{k}$ regularity for sufficiently large $k$. Fixing a $C^{k}$ activation function and a neural network in $D_{i}$, let $V_{i}$ be the function space associated with this neural network. Then $V_{i} \subset C^{k} (D_{i})$. Note that $V_{i}$ is not a linear subspace of $C^{k} (D_{i})$ since it is not closed under addition. Nevertheless, all elements of $V_{i}$ are represented by a fixed finite number, say $N$, of parameters. Therefore, $V_{i}$ is the image of a continuous map from $\mathbb{R}^{N}$ to $C^{k} (D_{i})$.

Define a product space
\[
V := \prod_{i=1}^{m} V_{i} \subset \prod_{i=1}^{m} C^{k} (D_{i}).
\]

We propose the following Algorithm \ref{alg_numerical_sequential} which is a numerical imitation of Algorithm \ref{alg_continuous_sequential}.
\begin{algorithm}[ht]
\caption{Numerical sequential DDM with PINNs to solve \eqref{eqn_problem}, under Assumption \ref{asp_decomposition_numerical}.}
\label{alg_numerical_sequential}

\begin{algorithmic}[1]
\State%
Choose an initial guess $\hat{u}^{0} = (\hat{u}_{1}^{0}, \cdots, \hat{u}_{m}^{0}) \in V$.

\State%
For each $n>0$,
\begin{quote}
for $1 \leq i \leq m$,
\begin{quote}
(assuming $\hat{u}^{n-1}$ and $\hat{u}^{n}_{j}$ have been obtained for all $j<i$,) define a function $\hat{\varphi}^{n}_{i}$ on $\partial D_{i}$ as follows. For $\xi \in \partial D_{i}$, if $\phi_{i} (\xi) \in \partial M$, let $\hat{\varphi}^{n}_{i} (\xi) = \varphi (\phi_{i} (\xi))$. Otherwise, if $\{ j \mid j<i, \phi_{i} (\xi) \in M_{j} \} \neq \emptyset$, then let $j_{0}$ be the maximum of this set and define
\[
\hat{\varphi}^{n}_{i} (\xi) = \hat{u}^{n}_{j_{0}} (\phi_{j_{0}}^{-1} \circ \phi_{i} (\xi)).
\]
Otherwise, let $j_{0} = \max \{ j \mid j \neq i, \phi_{i} (\xi) \in M_{j} \}$ and define
\[
\hat{\varphi}^{n}_{i} (\xi) = \hat{u}^{n-1}_{j_{0}} (\phi_{j_{0}}^{-1} \circ \phi_{i} (\xi)).
\]
Use PINNs method to find a function $\hat{u}^{n}_{i} \in V_{i}$ which approximates the solution to \eqref{eqn_domain_operator} with boundary value $\hat{\varphi}^{n}_{i}$.
\end{quote}
\end{quote}

Define $\hat{u}^{n} = (\hat{u}_{1}^{n}, \cdots, \hat{u}_{m}^{n}) \in V$.

\end{algorithmic}
\end{algorithm}

By Assumption \ref{asp_decomposition_numerical}, we have $\phi_{i} (\partial D_{i}) = \partial M_{i} \subset \partial M \cup \bigcup_{j \ne i} M_{j}$. If $\xi \in \partial D_{i}$ and $\phi_{i} (\xi) \notin \partial M$, then
\[
\{ j \mid j \neq i, \phi_{i} (\xi) \in M_{j} \} \ne \emptyset.
\]
So the $\hat{\varphi}^{n}_{i}$ in Algorithm \ref{alg_numerical_sequential} is well-defined.

Obviously, the $\hat{u}^{n}_{i}$ in Algorithm \ref{alg_numerical_sequential} is a numerical imitation of the $u^{n}_{i}$ in Algorithm \ref{alg_continuous_sequential}. Besides its numerical nature, Algorithm \ref{alg_numerical_sequential} is remarkably different from Algorithm \ref{alg_continuous_sequential} in two aspects. First, the $u^{n}_{i}$ in Algorithm \ref{alg_continuous_sequential} is globally defined on $M$, but the $\hat{u}^{n}_{i}$ in Algorithm \ref{alg_numerical_sequential} is only locally defined on $D_{i}$. Second, the $u^{n}$ in Algorithm \ref{alg_continuous_sequential} is also globally defined on $M$, however, $\hat{u}^{n}$ is a group of local functions. In fact, coordinates are indispensable for numerical computations. Since there are no global coordinates on $M$, it's impossible to express functions concretely over the entire $M$ in practice. To solve \eqref{eqn_problem} numerically, a feasible strategy is to approximate the exact solution $u$ piecewisely on a family of charts $\{ \phi_{i} (D_{i}) \mid 1 \le i \le m \}$ which cover $M$. The approximation of $\hat{u}^{n}_{i}$ to $u \circ \phi_{i}$ is equivalent to the approximation of $\hat{u}^{n}_{i} \circ \phi_{i}^{-1}$ to $u|_{\phi_{i} (D_{i})}$. If $\hat{u}^{n}_{i}$ approximates $u \circ \phi_{i}$ well for each $i$, the goal of numerical computation is achieved. This strategy had also been used in \cite{qin_zhang_zhang,cao_qin,qin_wang_wang} where finite element methods rather than PINNs were applied. In this paper, our numerical experiments in $\S$\ref{sec_experiment} show that this approach also works well for PINNs.

Now we formulate our parallel numerical DDM which needs a partition of unity.
\begin{assumption}\label{asp_partition_numerical}
The partition of unity  $\{ \rho_{i} \mid 1 \leq i \leq m \}$ is subordinate to the decomposition in Assumption \ref{asp_decomposition_numerical}, i.e.~$\mathrm{supp} \rho_{i} \subset M_{i} \setminus \overline{\gamma_{i}}$, where $\mathrm{supp} \rho_{i}$ is the support of $\rho_{i}$, and $M_{i} = \phi_{i} (D_{i})$. Furthermore, each $\rho_{i}$ is continuous.
\end{assumption}

In practice, the $\rho_{i}$'s can be constructed as follows. Choose nonnegative continuous functions $\mu_{i}$'s on $M$ such that $\mathrm{supp} \mu_{i} \subset \phi_{i} (D_{i})  \setminus \overline{\gamma_{i}}$ and $\sum_{i=1}^{m} \mu_{i} >0$ on $M$. A desired $\rho_{i}$ can be defined as
\begin{equation}\label{eqn_rho}
\rho_{i} = \frac{\mu_{i}}{\sum_{j=1}^{m} \mu_{j}}.
\end{equation}
To obtain $\mu_{i}$, it suffices to define $\mu_{i} \circ \phi_{i}$ which is a function on $D_{i}$ and hence can be defined simply and quite arbitrarily in terms of elementary functions.

The following Algorithm \ref{alg_numerical_parallel} is a numerical imitation of Algorithm \ref{alg_continuous_parallel}.
\begin{algorithm}[ht]
\caption{Numerical parallel DDM with PINNs to solve \eqref{eqn_problem}, under Assumptions \ref{asp_decomposition_numerical} and \ref{asp_partition_numerical}.}
\label{alg_numerical_parallel}

\begin{algorithmic}[1]
\State%
Choose an initial guess $\hat{u}^{0} = (\hat{u}_{1}^{0}, \cdots, \hat{u}_{m}^{0}) \in V$.

\State%
For each $n>0$,
\begin{quote}
for $1 \leq i \leq m$,
\begin{quote}
define a function $\hat{\varphi}^{n}_{i}$ on $\partial D_{i}$ as follows. For $\xi \in \partial D_{i}$, if $\phi_{i} (\xi) \in \partial M$, let $\hat{\varphi}^{n}_{i} (\xi) = \varphi (\phi_{i} (\xi))$. Otherwise, define
\begin{equation}\label{alg_numerical_parallel_1}
\hat{\varphi}^{n}_{i} (\xi) = \sum_{j=1}^{m} \rho_{j} (\phi_{i} (\xi)) \cdot \hat{u}_{j}^{n-1} (\phi_{j}^{-1} \circ \phi_{i} (\xi)).
\end{equation}
Use PINNs method to find a function $\hat{u}^{n}_{i} \in V_{i}$ which approximates the solution to \eqref{eqn_domain_operator} with boundary value $\hat{\varphi}^{n}_{i}$.
\end{quote}
\end{quote}

Define $\hat{u}^{n} = (\hat{u}_{1}^{n}, \cdots, \hat{u}_{m}^{n}) \in V$.

\end{algorithmic}
\end{algorithm}

The sum in \eqref{alg_numerical_parallel_1} needs more explanations. If $\phi_{i} (\xi) \notin \phi_{j} (D_{j})$, then $\phi_{j}^{-1} \circ \phi_{i} (\xi)$ is undefined. However, in this situation, $\rho_{j} (\phi_{i} (\xi)) =0$ and hence the summand $\rho_{j} (\phi_{i} (\xi)) \cdot \hat{u}_{j}^{n-1} (\phi_{j}^{-1} \circ \phi_{i} (\xi))$ is actually redundant. So \eqref{alg_numerical_parallel_1} indeed makes sense. Furthermore, if $\phi_{i} (\xi) \in \phi_{j} (D_{j})$, then
\[
\rho_{j} (\phi_{i} (\xi)) = (\rho_{j} \circ \phi_{j}) (\phi_{j}^{-1} \circ \phi_{i} (\xi)).
\]
Here $\rho_{j} \circ \phi_{j}$ is a function on $D_{j}$ with an explicit formula. Thus $\rho_{j} (\phi_{i} (\xi))$ can be obtained once we get the coordinates of $\phi_{j}^{-1} \circ \phi_{i} (\xi)$.

Like those in Algorithm \ref{alg_numerical_sequential}, each $\hat{u}^{n}_{i}$ in Algorithm \ref{alg_numerical_parallel} is a numerical imitation of the $u^{n}_{i}$ in Algorithm \ref{alg_continuous_parallel}, and each $\hat{u}^{n}_{i} \circ \phi_{i}^{-1}$ is expected to approximate the exact solution $u$ of \eqref{eqn_problem} on $\phi (D_{i})$. However, unlike those in Algorithm \ref{alg_numerical_sequential}, the $\hat{\varphi}^{n}_{i}$'s and $\hat{u}^{n}_{i}$'s for $1 \le i \le m$ in Algorithm \ref{alg_numerical_parallel} can be obtained independently and parallelly at each iterative step $n$.

Similar to Algorithms \ref{alg_continuous_sequential} and \ref{alg_continuous_parallel}, when $M$ is decomposed into two subdomains, Algorithms \ref{alg_numerical_sequential} and \ref{alg_numerical_parallel} are both reduced to Schwarz Alternating Methods. The reason is the same as the one explained in $\S$\ref{subsec_continuous_algorithm}. The following Algorithm \ref{alg_numerical_alternating} is a numerical Schwarz Alternating Method on manifolds.

\begin{algorithm}[ht]
\caption{Numerical Schwarz Alternating Method with PINNs to solve \eqref{eqn_problem}, under Assumption \ref{asp_decomposition_numerical} with $m=2$.}
\label{alg_numerical_alternating}

\begin{algorithmic}[1]
\State%
Choose an initial guess functions $\hat{u}^{0}_{2}$ on $D_{2}$.

\State%
For each $n>0$,
\begin{quote}
define a function $\hat{\varphi}^{n}_{1}$ on $\partial D_{1}$ as follows. For $\xi \in \partial D_{1}$, if $\phi_{1} (\xi) \in \partial M$, let $\hat{\varphi}^{n}_{1} (\xi) = \varphi (\phi_{1} (\xi))$. Otherwise, define
\[
\hat{\varphi}^{n}_{1} (\xi) = \hat{u}^{n-1}_{2} (\phi_{2}^{-1} \circ \phi_{1} (\xi)).
\]
Use PINNs method to find a function $\hat{u}^{n}_{1} \in V_{1}$ which approximates the solution to \eqref{eqn_domain_operator} with boundary value $\hat{\varphi}^{n}_{1}$.

Define a function $\hat{\varphi}^{n}_{2}$ on $\partial D_{2}$ as follows. For $\xi \in \partial D_{2}$, if $\phi_{2} (\xi) \in \partial M$, let $\hat{\varphi}^{n}_{2} (\xi) = \varphi (\phi_{2} (\xi))$. Otherwise, define
\[
\hat{\varphi}^{n}_{2} (\xi) = \hat{u}^{n}_{1} (\phi_{1}^{-1} \circ \phi_{2} (\xi)).
\]
Use PINNs method to find a function $\hat{u}^{n}_{2} \in V_{2}$ which approximates the solution to \eqref{eqn_domain_operator} with boundary value $\hat{\varphi}^{n}_{2}$.
\end{quote}
\end{algorithmic}
\end{algorithm}

Note that PINNs method essentially solves a problem of optimization. Typically, a search for the optimal function of this problem is an iterative process. As a result, the above Algorithms \ref{alg_numerical_sequential}, \ref{alg_numerical_parallel} and \ref{alg_numerical_alternating} are nested iterations. The outer iterations are frameworks provided by DDMs. The inner iterations are the implementations of PINNs at each iterative step of the frameworks.

\subsection{Physics-Informed Neural Networks}
The physics-informed neural networks (PINNs) method popularized by \cite{RPK2019} can solve general (linear or nonlinear) differential equations in domains in Euclidean spaces. It has many advantages. A major advantage is its mesh-free property. Another major one is that it has strong ability to mitigate the curse of dimensionality. In this paper, we shall mainly take the second advantage mentioned here. More precisely, we shall solve \eqref{eqn_domain_operator} with a certain boundary value $\hat{\varphi}^{n}_{i}$ in a domain $D_{i}$ in $\mathbb{R}^{d}$, where $d \ge 5$. In the case of such high dimensions, \eqref{eqn_domain_operator} can hardly be solved effectively by traditional methods, including finite difference method and finite element method.

There are three main ingredients of PINNs: (1) the architecture of neural networks; (2) a loss function to be optimized; (3) an algorithm of optimization. Let's introduce one by one the ingredients in our case.

First, we shall build two types of deep neural networks on $D_{i}$, which will be described in $\S$\ref{sec_experiment}. The activation function is chosen as the hyperbolic tangent function $\tanh$ which is $C^{\infty}$ and even analytic. As before, let $V_{i}$ denote the function space associated with a chosen neural network on $D_{i}$. Now $V_{i} \subset C^{\infty} (D_{i})$.

Second, we define the loss function $\mathcal{L}_{PINN}$ for \eqref{eqn_domain_operator} with boundary value $\hat{\varphi}^{n}_{i}$ as,
\begin{equation}\label{eqn_loss}
\forall v \in V_{i}, \qquad \mathcal{L}_{PINN} (v) = (1-\lambda_{BC}) \mathcal{L}_{PDE} (v) + \lambda_{BC} \mathcal{L}_{BC} (v),
\end{equation}
where $\lambda_{BC} \in (0,1)$ is a positive hyperparameter of penalty. (The abbreviation ``BC" stands for ``boundary condition".) In addition,
\begin{equation}\label{eqn_loss_pde}
\mathcal{L}_{PDE} (v) = \frac{1}{|D_{i}|} \| \hat{L} v - f \circ \phi_{i} \|_{L^{2} (D_{i})}^{2} = \frac{1}{|D_{i}|} \int_{D_{i}} |\hat{L} v(x) - f \circ \phi_{i} (x)|^{2} \mathrm{d} x
\end{equation}
and
\begin{equation}\label{eqn_loss_bc}
\mathcal{L}_{BC} (v) = \frac{1}{|\partial D_{i}|} \| v - \hat{\varphi}^{n}_{i} \||_{L^{2} (\partial D_{i})}^{2} = \frac{1}{|\partial D_{i}|} \int_{\partial D_{i}} |v(y) - \hat{\varphi}^{n}_{i} (y)|^{2} \mathrm{d}y,
\end{equation}
where $|D_{i}|$ (resp.~$|\partial D_{i}|$) is the $d$-dimensional (resp.~$(d-1)$-dimensional) volume of $D_{i}$ (resp.~$\partial D_{i}$). The above integrals are numerically approximated using Monte Carlo method. We shall solve the following optimization problem
\begin{equation}\label{eqn_optimization}
\hat{u}^{n}_{i} := \mathrm{arg} \underset{v \in V_{i}}{\mathrm{min}} \ \mathcal{L}_{PINN} (v).
\end{equation}
At first glance, \eqref{eqn_optimization} is a linear least squares problem. But it is actually not because, as mentioned before, $V_{i}$ is not a linear space. Note that all elements of $V_{i}$ are represented by a fixed finite number, say $N$, of parameters. Thus $V_{i}$ is the image of a continuous map $F \colon \mathbb{R}^{N} \rightarrow C^{\infty} (D_{i})$, where $F$ is determined by the architecture of the neural network and is often highly nonlinear. In this context, each $X \in \mathbb{R}^{N}$ is a collection of parameters with specific values representing a function $F (X) \in V_{i}$. Now \eqref{eqn_optimization} is converted to the following down-to-earth optimization problem
\begin{equation}\label{eqn_optimization_parameter}
\mathrm{arg} \underset{X \in \mathbb{R}^{N}}{\mathrm{min}} \ \mathcal{L}_{PINN} (F (X))
\end{equation}
which is highly nonlinear and nonconvex.

Third, in practice, \eqref{eqn_optimization_parameter} can only be approximately solved. A search of good approximations is usually an iterative process. We shall use Adaptive Moment Estimation (Adam) \cite{kingma_ba} optimizer to implement this search. Furthermore, to compute \eqref{eqn_loss_pde} and \eqref{eqn_loss_bc}, we shall randomly sample a collection of points $\{ x_{i} \}_{i=1}^{N_{p}}$ (resp.~$\{ y_{i} \}_{i=1}^{N_{b}}$) in $D_{i}$ (resp.~on $\partial D_{i}$). These sampling points are updated at each Adam iterative step in order to avoid overfitting, although the numbers $N_{p}$ and $N_{b}$ remain constants.

\section{Numerical Experiments}\label{sec_experiment}
We perform numerical tests of Algorithms \ref{alg_numerical_sequential} and \ref{alg_numerical_parallel} on manifolds with or without boundary. The dimensions vary from $5$ to $10$. The manifolds are $S^{5}$, $S^{10}$, $B^{3} \times S^{3}$ and $\mathbb{CP}^{3}$. Here $S^{d}$ is the unit sphere in $\mathbb{R}^{d+1}$, $B^{d}$ is the unit ball in $\mathbb{R}^{d}$, and $\mathbb{CP}^{3}$ is the complex projective space with complex dimension $3$ and hence real dimension $6$.

\subsection{On \texorpdfstring{$S^{5}$}{S5}}
We shall perform numerical tests on $5$-dimensional manifold $S^{5}$.

First of all, we introduce a general decomposition on $M= S^{d}$, where $S^{d}$ is the unit sphere in $\mathbb{R}^{d+1}$, i.e.
\[
M = S^{d} = \left\{ y= (y_{1}, \dots, y_{d+1}) \in \mathbb{R}^{d+1} \middle| \| y \|^{2} = \sum_{i=1}^{d+1} y_{i}^{2}  = 1  \right\}.
\]
This is a $d$-dimensional manifold without boundary. We decompose $S^{d}$ into two subdomains as follows. By stereographic projections from the south pole $(0, \dots ,0,-1)$ and north pole $(0, \dots, 0,1)$, we obtain two subdomains $M_{1}$ and $M_{2}$ with coordinates whose interiors form an open cover of $S^{d}$.
\begin{figure}[htbp]
  \centering
  \begin{minipage}{0.48\textwidth}
    \centering
    \includegraphics[width=\textwidth]{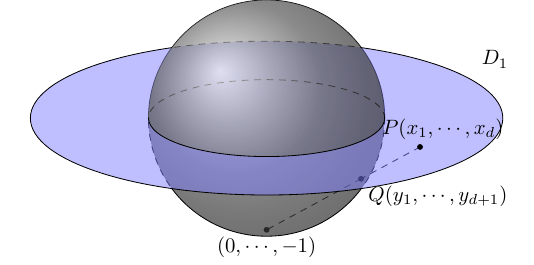}
  \end{minipage}
  \hfill
  \begin{minipage}{0.48\textwidth}
    \centering
    \includegraphics[width=\textwidth]{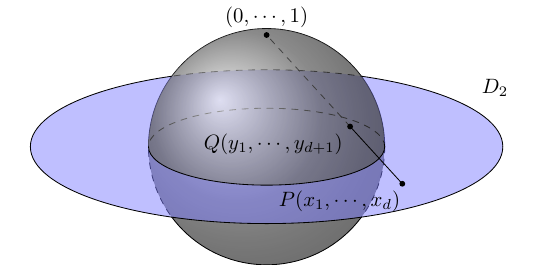}
  \end{minipage}
  \caption{An illustration of stereographic projections.}
  \label{fig_sphere}
\end{figure}
For an illustration please refer to Fig.~\ref{fig_sphere}, where the vertical direction stands for the direction of the $(d+1)$-th coordinate axis, the disk $D_{i}$ is a closed ball in
\[
\mathbb{R}^{d} \cong \mathbb{R}^{d} \times \{ 0 \} \subset \mathbb{R}^{d+1},
\]
and the intersection of $\mathbb{R}^{d} \times \{ 0 \}$ and $S^{d}$ is the equator of $S^{d}$. For each point $P= (x_{1}, \dots, x_{d}) \in D_{1}$, the line segment between $P$ and the south pole $(0, \dots, 0, -1)$ intersects $S^{d}$ at a single point $Q = (y_{1}, \dots, y_{d+1})$ other than $(0, \dots, 0, -1)$. The map $P \mapsto Q$ provides an embedding $\phi_{1}: D_{1} \rightarrow S^{d}$. We obtain another embedding $\phi_{2}$ if the south pole is replaced with the north pole. More precisely, we have
\begin{equation}\label{eqn_Sd_domain}
D_{1} = D_{2} = B^{d}(r) = \{ x= (x_{1}, \dots, x_{d}) \in \mathbb{R}^{d} \mid| \| x \| \le r \} \subset \mathbb{R}^{d},
\end{equation}
i.e.~$D_{1}$ and $D_{2}$ are the ball in $\mathbb{R}^{d}$ with center the origin $\mathbf{0}$ and radius $r$. The stereographic projections provide diffeomorphisms $\phi_{i}: D_{i} \rightarrow M_{i}$ as
\begin{eqnarray}\label{eqn_upper_Sd}
\phi_{1} \colon \ D_{1} & \rightarrow & M_{1} \subset S^{d} \subset \mathbb{R}^{d+1} \nonumber \\
x & \mapsto & \left( \frac{2x}{1 + \|x\|^{2}}, \frac{1 - \|x\|^{2}}{1 + \|x\|^{2}} \right)
\end{eqnarray}
and
\begin{eqnarray}\label{eqn_lower_Sd}
\phi_{2} \colon \ D_{2} & \rightarrow & M_{2} \subset S^{d} \subset \mathbb{R}^{d+1} \nonumber \\
x & \mapsto & \left( \frac{2x}{1 + \|x\|^{2}}, \frac{-1 + \|x\|^{2}}{1 + \|x\|^{2}} \right).
\end{eqnarray}
To guarantee $S^{d} = \bigcup_{i=1}^{2} (M_{i} \setminus \overline{\gamma_{i}}) = \bigcup_{i=1}^{2} (M_{i} \setminus \partial M_{i})$, we have to let $r>1$. The larger $r$ is, the more overlapping there will be. The transitions of coordinates are given by
\[
\phi_{2}^{-1} \circ \phi_{1} (x) = \phi_{1}^{-1} \circ \phi_{2} (x) = \frac{x}{\| x \|^{2}}.
\]

Second, equip $S^{d}$ with the Riemannian metric $g$ inherited from the standard one on $\mathbb{R}^{d+1}$. On each $D_{i}$,
\[
g = 4 (1+ \| x \|^{2})^{-2} \sum_{\alpha = 1}^{d} \mathrm{d} x_{\alpha} \otimes \mathrm{d} x_{\alpha},
\]
and
\[
\Delta v = 4^{-1} (1+ \| x \|^{2})^{2} \sum_{\alpha=1}^{d} \frac{\partial^{2} v}{\partial x_{\alpha}^{2}} - 2^{-1} (d-2) (1+ \| x \|^{2}) \sum_{\alpha=1}^{d} x_{\alpha} \frac{\partial v}{\partial x_{\alpha}}.
\]

Third, consider the model problem \eqref{eqn_problem} on $S^{d}$ with $b>0$. Choose the exact solution to \eqref{eqn_problem} as
\[
u= y_{d+1},
\]
where $y_{d+1}$ is the $(d+1)$-th coordinate of $\mathbb{R}^{d+1}$. Then
\[
f=(d+b) u
\]
in \eqref{eqn_problem}. On $D_{i}$, the exact solution $u$ has the expression
\[
\hat{u}_{1} (x) := u \circ \phi_{1} (x) = \frac{1 - \|x\|^{2}}{1 + \|x\|^{2}} \qquad \text{and} \qquad \hat{u}_{2} (x) := u \circ \phi_{2} (x) = \frac{-1 + \|x\|^{2}}{1 + \|x\|^{2}}.
\]

In the following, we choose $b=1$. Take $r=1.2$ in \eqref{eqn_Sd_domain}. Numerical experiments are performed on $S^{d}$ with $d=5$ and $10$. Since $S^{d}$ is decomposed into two subdomains, Algorithms \ref{alg_numerical_sequential} and \ref{alg_numerical_parallel} are both reduced to Algorithm \ref{alg_numerical_alternating}. We only need to test Algorithm \ref{alg_numerical_alternating} in the current situation.

Our numerical experiments begin with the case $S^{5}$. We employ a fully connected neural network with $4$ hidden layers. The width of each hidden layer is $500$. The activation function is the hyperbolic tangent function $\tanh$. The $\lambda_{BC}$ in \eqref{eqn_loss} is set to $0.8$. For each inner iteration, we sample a total of $2000$ random points, with $1200$ in the interior and $800$ on the boundary. For each outer iterative step, the inner iteration runs $5000$ steps using the Adam optimizer.

Define the relative $L^{2}$-error of $\hat{u}^{n} = (\hat{u}^{n}_{1}, \hat{u}^{n}_{2})$ with respect to the exact solution $\hat{u} = (\hat{u}_{1}, \hat{u}_{2})$ as
\begin{equation}\label{eqn_relative_L2}
\max_{1 \le i \le 2} \left\{ \frac{\| \hat{u}^{n}_{i} - \hat{u}_{i} \|_{L^{2} (D_{i})}}{\| \hat{u}_{i} \|_{L^{2} (D_{i})}} \right\},
\end{equation}
where $\hat{u}^{n}$ is the $n$-th iterative approximation of the outer iteration.

We perform five numerical tests independently. The mean value and standard deviation of the relative $L^{2}$-errors \eqref{eqn_relative_L2} are reported in Table \ref{tab_S5}, where the relative $L^{2}$-errors are calculated by Monte Carlo method.
\begin{table}[ht]
\begin{center}
\begin{tabular}{|c|c|c|c|c|c|c|}
\hline
$n$ & $0$ & $5$ & $10$ & $15$ & $20$ & $100$ \\
\hline
Mean & $1.0559$ & $0.2067$ & $0.0290$ & $4.18e-3$ & $1.62e-3$ & $3.47e-4$ \\
\hline
Std Dev & $0.0452$ & $0.0316$ & $5.53e-3$ & $1.34e-3$ & $6.59e-4$ & $2.77e-5$ \\
\hline
\end{tabular}
\end{center}
\caption{Relative $L^{2}$-errors of $\hat{u}^{n}$ with respect to $\hat{u}$ on $S^{5}$.}
\label{tab_S5}
\end{table}

We also plot the convergence logarithm curve in Fig.~\ref{fig_S5_error}, where the horizontal coordinate is $n$, the vertical coordinate is the mean of the relative $L^{2}$-errors \eqref{eqn_relative_L2}.
\begin{figure}[htbp]
\begin{center}
  \includegraphics[width=0.5\textwidth]{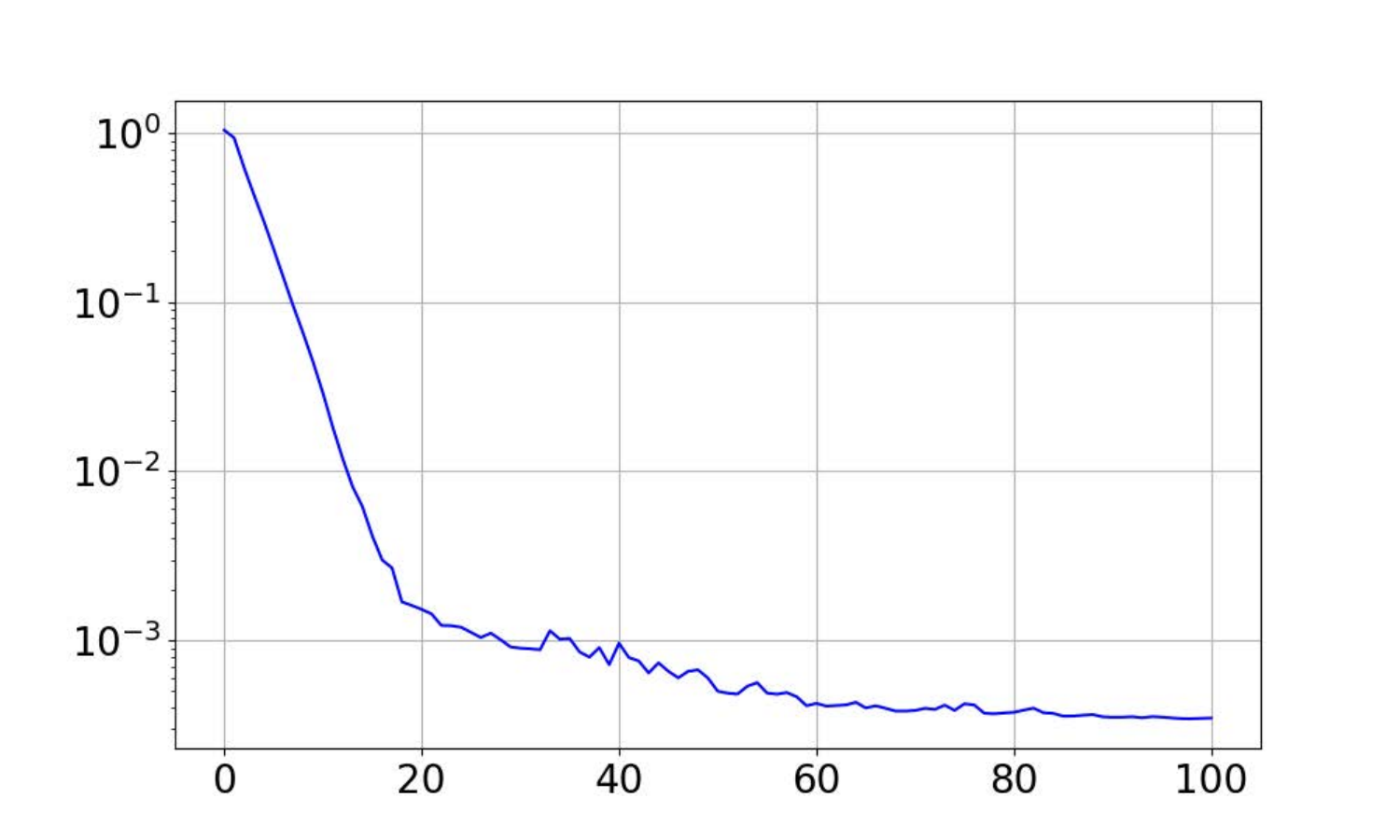}
\end{center}
  \caption{Relative $L^{2}$-convergence of $\hat{u}^{n}$ with respect to $\hat{u}$ on $S^{5}$.}
  \label{fig_S5_error}
\end{figure}

As shown in Table \ref{tab_S5} and Fig.~\ref{fig_S5_error}, the $\hat{u}^{n}$ sequence approximates the exact solution $\hat{u}$ well, and the outer iteration stabilizes after $60$ steps.

We set $\hat{u}^{\infty} := \hat{u}^{100}$ and define the relative $L^{2}$-error of $\hat{u}^{n}$ with respect to $\hat{u}^{\infty}$ as
\begin{equation}\label{eqn_uinfty_relative_L2}
\max_{1 \le i \le 2} \left\{ \frac{\| \hat{u}^{n}_{i} - \hat{u}^{\infty}_{i} \|_{L^{2} (D_{i})}}{\| \hat{u}_{i}^{\infty} \|_{L^{2} (D_{i})}} \right\}.
\end{equation}
The convergence logarithm curve of $\hat{u}^{n}$ with respect to $\hat{u}^{\infty}$ is in Fig.~\ref{fig_S5_infty}, where the horizontal coordinate is $n$, the vertical coordinate is the mean of the relative $L^{2}$-errors \eqref{eqn_uinfty_relative_L2}.
\begin{figure}[htbp]
\begin{center}
  \includegraphics[width=0.5\textwidth]{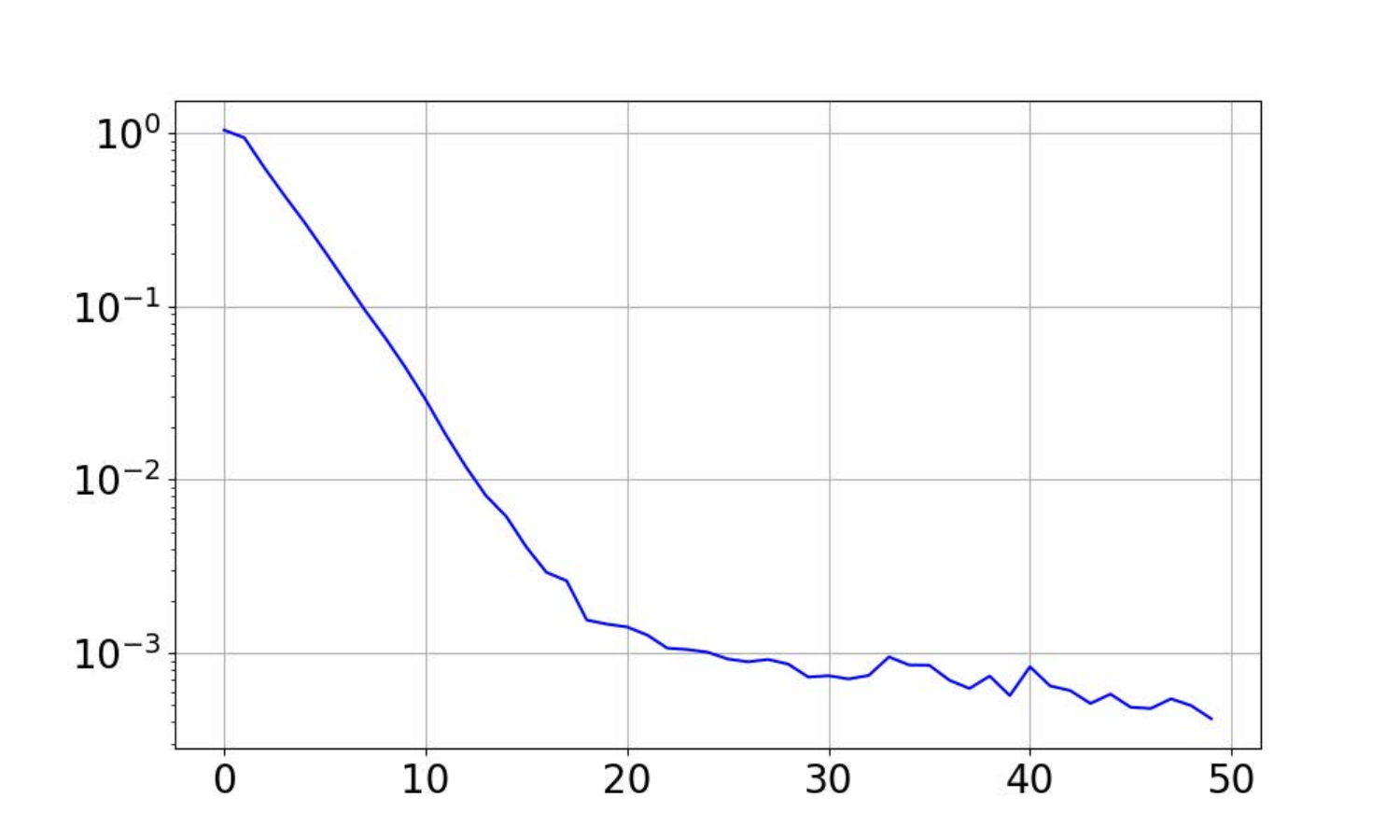}
\end{center}
  \caption{Relative $L^{2}$-convergence of $\hat{u}^{n}$ with respect to $\hat{u}^{\infty}$ on $S^{5}$.}
  \label{fig_S5_infty}
\end{figure}

\subsection{On \texorpdfstring{$S^{10}$}{S10}}
Now we perform numerical experiments on $S^{10}$. The network is a fully connected one with $6$ hidden layers, the width of each hidden layer is $500$. The activation function is $\tanh$. The $\lambda_{BC}$ in \eqref{eqn_loss} is set to $0.9$. For each inner iteration, we sample a total of $5000$ random points, with $4500$ in the interior and $500$ on the boundary. For each outer iterative step, the inner iteration runs $5000$ steps using the Adam optimizer.

We test five times independently. The numerical results are presented in Table \ref{tab_S10} and Figs.~\ref{fig_S10_error} and \ref{fig_S10_infty}. They also show that the the $\hat{u}^{n}$ sequence approximates the exact solution $\hat{u}$ well.
\begin{table}[ht]
\begin{center}
\begin{tabular}{|c|c|c|c|c|c|c|}
\hline
$n$ & $0$ & $5$ & $10$ & $15$ & $20$ & $100$ \\
\hline
Mean & $1.1359$ & $0.5412$ & $0.1435$ & $0.0443$ & $0.0131$ & $1.13e-3$ \\
\hline
Std Dev & $0.1594$ & $0.2394$ & $0.0600$ & $0.0307$ & $9.01e-3$ & $1.09e-4$ \\
\hline
\end{tabular}
\end{center}
\caption{Relative $L^{2}$-errors of $\hat{u}^{n}$ with respect to $\hat{u}$ on $S^{10}$.}
\label{tab_S10}
\end{table}

\begin{figure}[htbp]
\begin{center}
  \includegraphics[width=0.5\textwidth]{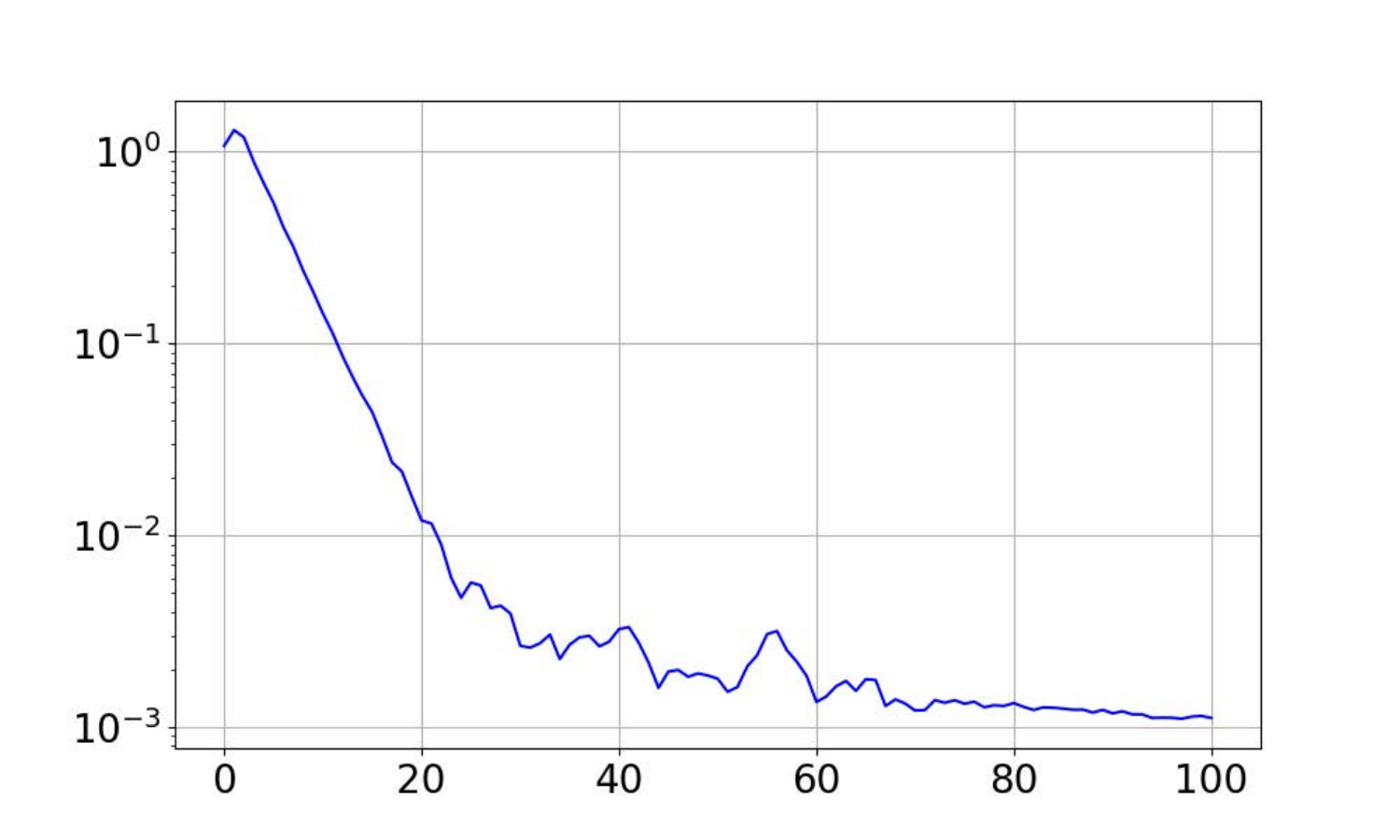}
\end{center}
  \caption{Relative $L^{2}$-convergence of $\hat{u}^{n}$ with respect to $\hat{u}$ on $S^{10}$.}
  \label{fig_S10_error}
\end{figure}

\begin{figure}[htbp]
\begin{center}
  \includegraphics[width=0.5\textwidth]{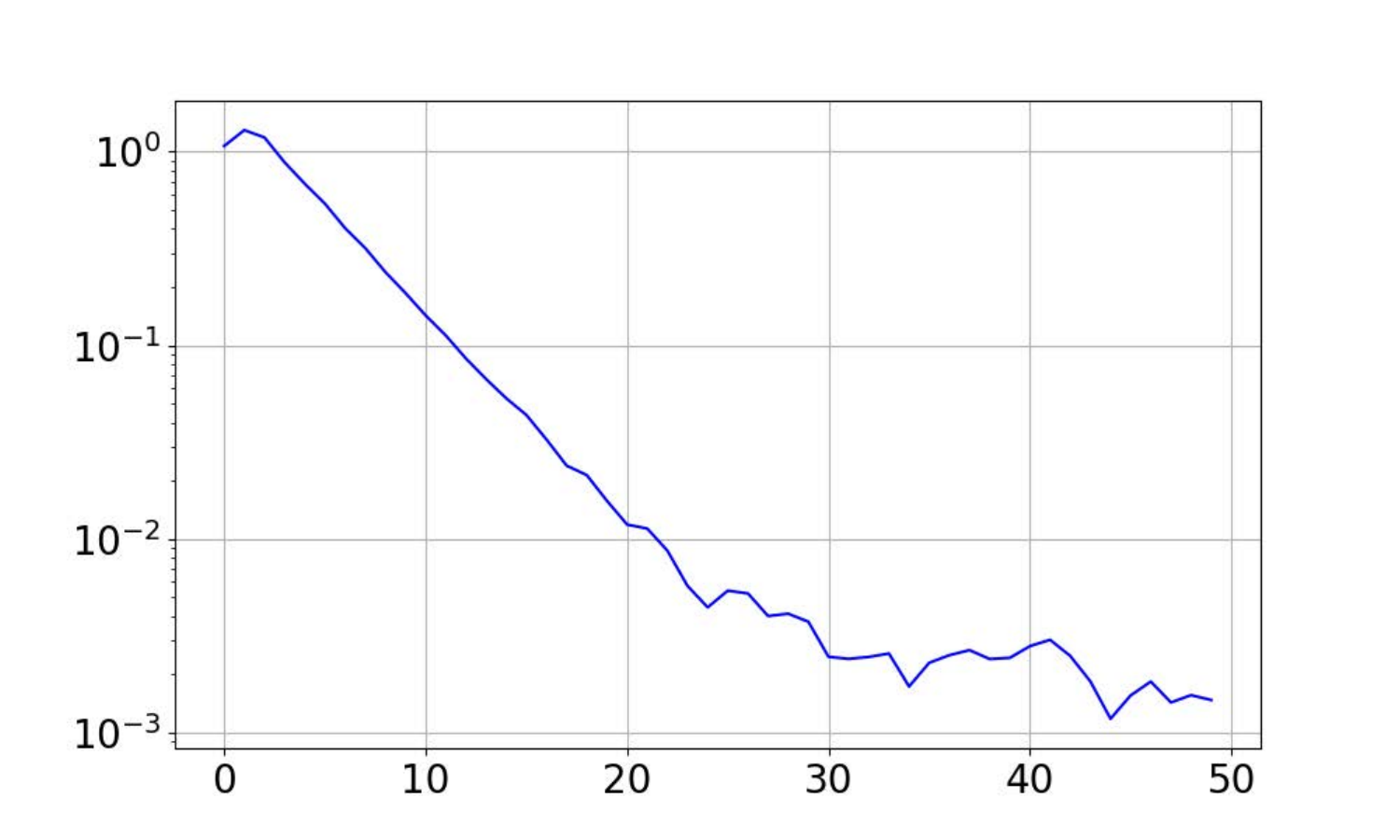}
\end{center}
  \caption{Relative $L^{2}$-convergence of $\hat{u}^{n}$ with respect to $\hat{u}^{\infty}$ on $S^{10}$.}
  \label{fig_S10_infty}
\end{figure}

\subsection{On \texorpdfstring{$B^{3} \times S^{3}$}{B3xS3}}\label{subsec_B3S3}
We shall perform numerical tests on $6$-dimensional manifold $B^{3} \times S^{3}$ which has boundary $S^{2} \times S^{3}$.

First of all, we introduce a general decomposition on $M = B^{p} \times S^{q}$. Here $B^{p}$ is the unit ball in $\mathbb{R}^{p}$, i.e.
\[
B^{p} = \{ y = (y_{1}, \dots, y_{p}) \in \mathbb{R}^{p} \mid \| y \| \le 1 \},
\]
and $S^{q}$ is the unit sphere in $\mathbb{R}^{q+1}$. This $M$ is a manifold with dimension $p+q$ and with boundary $\partial M = (\partial B^{p}) \times S^{q} = S^{p-1} \times S^{q}$.

As mentioned at the end of $\S$\ref{subsec_decompose}, to decompose the product manifold $B^{p} \times S^{q}$, it suffices to decompose its factors $B^{p}$ and $S^{q}$. Since $B^{p}$ is already a simple domain in $B^{p}$, it does not need any further decomposition. We decompose $S^{q}$ by stereographic projections again. Two diffeomorphisms $B^{q} (r) \rightarrow M'_{i} \subset S^{q}$ ($i=1,2$) are obtained, where $M'_{i}$ is a domain in $S^{q}$, and $B^{q} (r)$ is the following ball in $\mathbb{R}^{q}$,
\[
B^{q}(r) = \{ x= (x_{1}, \dots, x_{q}) \in \mathbb{R}^{q} \mid| \| x \| \le r \}.
\]
In addition, the formulas of these two diffeomorphisms are given by \eqref{eqn_upper_Sd} and \eqref{eqn_lower_Sd}.

Overall, we can define
\begin{equation}\label{eqn_BpSq_domain}
D_{1} = D_{2} = B^{p} \times B^{q}(r) \subset \mathbb{R}^{p+q}.
\end{equation}
The maps of decomposition of $B^{p} \times S^{q}$ are two diffeomorphisms
\begin{eqnarray*}
\phi_{1} \colon \ D_{1} = B^{p} \times B^{q}(r) & \rightarrow & M_{1} \subset M \subset B^{p} \times \mathbb{R}^{q+1} \\
(y,x) & \mapsto & \left( y, \frac{2x}{1 + \|x\|^{2}}, \frac{1 - \|x\|^{2}}{1 + \|x\|^{2}} \right)
\end{eqnarray*}
and
\begin{eqnarray*}
\phi_{2} \colon \ D_{2} = B^{p} \times B^{q}(r) & \rightarrow & M_{1} \subset M \subset B^{p} \times \mathbb{R}^{q+1} \\
(y,x) & \mapsto & \left( y, \frac{2x}{1 + \|x\|^{2}}, \frac{-1 + \|x\|^{2}}{1 + \|x\|^{2}} \right),
\end{eqnarray*}
where $y \in B^{p}$ and $x \in B^{q}(r)$. Note that, for $i=1$ and $2$,
\[
\partial D_{i} = \partial [B^{p} \times B^{q}(r)] = [(\partial B^{p}) \times B^{q}(r)] \cup [ B^{p} \times \partial B^{q}(r)].
\]
We also see that
\[
\gamma_{i} = \phi_{i} [(B^{p} \setminus \partial B^{p}) \times \partial B^{q}(r)], \qquad \overline{\gamma_{i}} = \phi_{i} [B^{p} \times \partial B^{q}(r)],
\]
and
\[
M_{i} \cap \partial M = \phi_{i} [(\partial B^{p}) \times B^{q}(r)] = \phi_{i} [S^{p-1} \times B^{q}(r)].
\]
To guarantee $B^{p} \times S^{q} = \bigcup_{i=1}^{2} (M_{i} \setminus \overline{\gamma_{i}})$, we have to let $r>1$. The larger $r$ is, the more overlapping there will be. The transitions of coordinates are given by
\[
\phi_{2}^{-1} \circ \phi_{1} (y,x) = \phi_{1}^{-1} \circ \phi_{2} (y,x) = \left( y, \frac{x}{\| x \|^{2}} \right).
\]

Second, equip $B^{p} \times S^{q}$ with the Riemannian metric $g$ inherited from the standard one on $\mathbb{R}^{p+q+1}$. On each $D_{i}$,
\[
g = \sum_{\alpha = 1}^{p} \mathrm{d} y_{\alpha} \otimes \mathrm{d} y_{\alpha} + 4 (1+ \| x \|^{2})^{-2} \sum_{\alpha = 1}^{q} \mathrm{d} x_{\alpha} \otimes \mathrm{d} x_{\alpha},
\]
and
\[
\Delta v = \sum_{\alpha=1}^{p} \frac{\partial^{2} v}{\partial y_{\alpha}^{2}} + 4^{-1} (1+ \| x \|^{2})^{2} \sum_{\alpha=1}^{q} \frac{\partial^{2} v}{\partial x_{\alpha}^{2}} - 2^{-1} (q-2) (1+ \| x \|^{2}) \sum_{\alpha=1}^{q} x_{\alpha} \frac{\partial v}{\partial x_{\alpha}}.
\]

Third, consider the model problem \eqref{eqn_problem} on $B^{p} \times S^{q}$ with $b \ge 0$. Choose the exact solution to \eqref{eqn_problem} as
\[
u (y,y')= \sin (\pi y_{p}) + y'_{q+1},
\]
where $y= (y_{1}, \dots, y_{p}) \in B^{p}$ and $y' = (y'_{1}, \dots, y'_{q+1}) \in S^{q} \subset \mathbb{R}^{q+1}$. Then
\[
f= (b+ \pi^{2}) \sin (\pi y_{p}) + (b+q) y'_{q+1}
\]
in \eqref{eqn_problem}. On $D_{i}$, $u$ and $f$ has the expression
\[
\hat{u}_{1} (y,x) := u \circ \phi_{1} (y,x) = \sin (\pi y_{p}) + \frac{1 - \|x\|^{2}}{1 + \|x\|^{2}},
\]
\[
\hat{u}_{2} (y,x) :=  u \circ \phi_{2} (y,x) = \sin (\pi y_{p}) + \frac{-1 + \|x\|^{2}}{1 + \|x\|^{2}},
\]
\[
f \circ \phi_{1} (y,x) = (b+ \pi^{2}) \sin (\pi y_{p}) + (b+q) \frac{1 - \|x\|^{2}}{1 + \|x\|^{2}},
\]
and
\[
f \circ \phi_{2} (y,x) = (b+ \pi^{2}) \sin (\pi y_{p}) + (b+q) \frac{-1 + \|x\|^{2}}{1 + \|x\|^{2}}.
\]

Since $B^{p} \times S^{q}$ is decomposed into two subdomains, we also only need to test Algorithm \ref{alg_numerical_alternating}. Now we choose $p=q=3$ and $b=0$. Take $r=1.2$ in \eqref{eqn_BpSq_domain}.

We build a new type of network with $4$ hidden layers, each of width $500$. This network is a variant of ResNet, but it differs considerably from the standard ResNet and closely resembles the one described in \cite[$\S$4]{TKZMZL2024}. The hidden part of our network consists of two successive blocks, each of which has two layers but includes only one shortcut connection. More precisely, a function associated with our network takes the form
\[
F(X) = W_{3} \cdot R_{2} \circ R_{1}(X) + b_{3}
\]
where $W_{3}$ is a weight vector and $b_{3}$ is a bias scalar. For $i=1,2$, the map $R_{i} \colon \mathbb{R}^{n} \rightarrow \mathbb{R}^{m}$ is defined by
\[
R_{i} (X) = \sigma (W_{i,2} \sigma (W_{i,1} X + b_{i,1}) + b_{i,2} + W_{i,0} X),
\]
where $\sigma$ is an activation function, $W_{i,j}$'s are weight matrices, and $b_{i,j}$'s are bias vectors. Each $R_{i} (X)$ corresponds to the map represented by one block of the network. The addition of $W_{i,0} X$ is a shortcut connection. We set $W_{i,0} = I$ when $n=m$; in that case, the shortcut reduces to the identity shortcut used in the standard ResNet.

We take our activation function as $\tanh$. The $\lambda_{BC}$ in \eqref{eqn_loss} is set to $0.995$. For each inner iteration, we sample a total of $2000$ random points, with $200$ in the interior and $1800$ on the boundary. For each outer iterative step, the inner iteration runs $5000$ steps using the Adam optimizer. We perform five tests independently. The numerical results are presented in Table \ref{tab_B3S3} and Figs.~\ref{fig_B3S3_error} and \ref{fig_B3S3_infty}.

\begin{table}[ht]
\begin{center}
\begin{tabular}{|c|c|c|c|c|c|c|}
\hline
$n$ & $0$ & $5$ & $10$ & $15$ & $20$ & $100$ \\
\hline
Mean & $1.0265$ & $2.25e-3$ & $1.62e-3$ & $1.04e-3$ & $7.90e-4$ & $1.69e-4$ \\
\hline
Std Dev & $0.0204$ & $2.91e-4$ & $3.95e-4$ & $1.03e-4$ & $1.02e-4$ & $6.66e-6$ \\
\hline
\end{tabular}
\end{center}
\caption{Relative $L^{2}$-errors of $\hat{u}^{n}$ with respect to $\hat{u}$ on $B^{3} \times S^{3}$.}
\label{tab_B3S3}
\end{table}

\begin{figure}[htbp]
\begin{center}
  \includegraphics[width=0.5\textwidth]{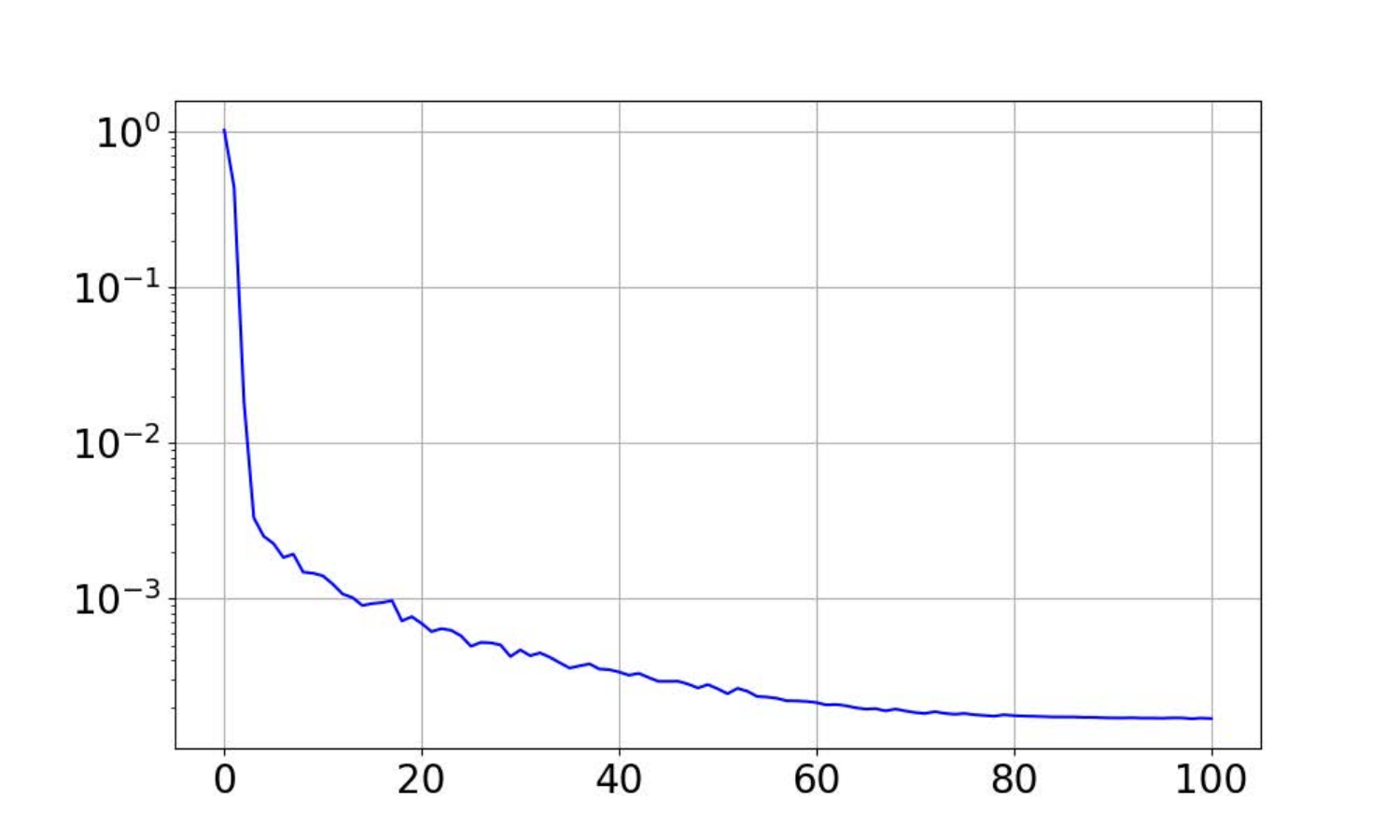}
\end{center}
  \caption{Relative $L^{2}$-convergence of $\hat{u}^{n}$ with respect to $\hat{u}$ on $B^{3} \times S^{3}$.}
  \label{fig_B3S3_error}
\end{figure}

\begin{figure}[htbp]
\begin{center}
  \includegraphics[width=0.5\textwidth]{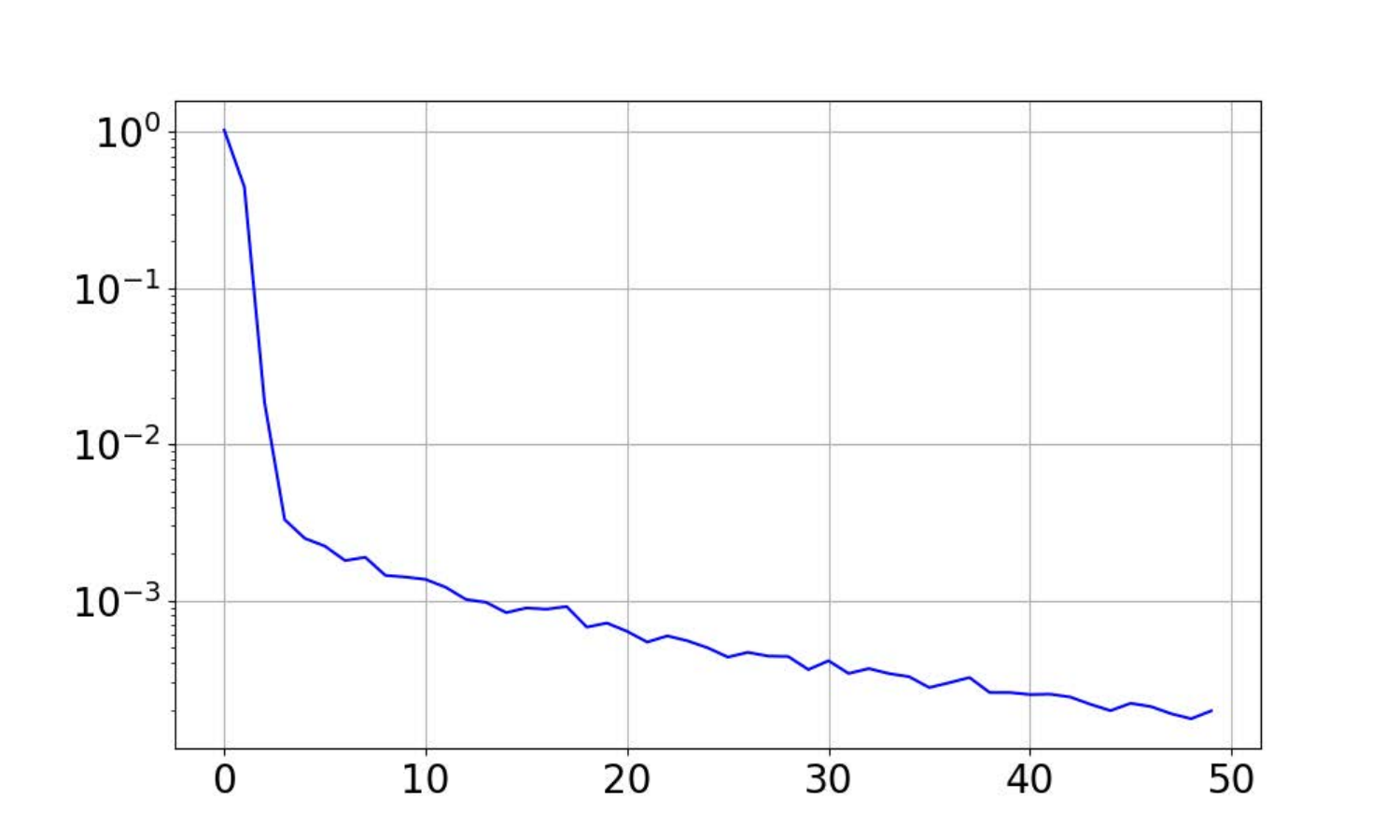}
\end{center}
  \caption{Relative $L^{2}$-convergence of $\hat{u}^{n}$ with respect to $\hat{u}^{\infty}$ on $B^{3} \times S^{3}$.}
  \label{fig_B3S3_infty}
\end{figure}

\subsection{On \texorpdfstring{$\mathbb{CP}^{3}$: Sequential Domain Decomposition Method}{CP3}}
We shall test Algorithm \ref{alg_numerical_sequential} on the complex projective space $\mathbb{CP}^{3}$ which has complex dimension $3$ and real dimension $6$. Unlike the previous examples $S^{d}$ and $B^{p} \times S^{q}$, the complex projective space $\mathbb{CP}^{k}$ is not defined as a submanifold of $\mathbb{R}^{n}$.

First, let's recall the definition of the complex projective space $\mathbb{CP}^{k}$ and formulate a canonical decomposition on it. The $\mathbb{CP}^{k}$ can be defined as a quotient space. Let
\[
\mathbb{C}^{k+1} \setminus \{ \mathbf{0} \} = \{ (w_{0}, w_{1}, \dots, w_{k}) \mid \mathbf{0} \neq (w_{0}, w_{1}, \dots, w_{k}) \in \mathbb{C}^{k+1} \}.
\]
Here $\mathbf{0} \in \mathbb{C}^{k+1}$ is the origin, each $w_{j}$ is a complex number for $0 \leq j \leq k$, and, following the convention of algebraic geometry, the index $j$ starts from $0$ rather than $1$. Define a relation of equivalence on $\mathbb{C}^{k+1} \setminus \{ \mathbf{0} \}$ as
\[
(w_{0}, w_{1}, \dots, w_{k}) \sim (w'_{0}, w'_{1}, \dots, w'_{k})
\]
if and only if
\[
(w_{0}, w_{1}, \dots, w_{k}) = \lambda (w'_{0}, w'_{1}, \dots, w'_{k})
\]
for some $0 \neq \lambda \in \mathbb{C}$. Define
\[
\mathbb{CP}^{k} = (\mathbb{C}^{k+1} \setminus \{ \mathbf{0} \}) / \sim.
\]
Thus, every $P \in \mathbb{CP}^{k}$ can be represented by a vector $(w_{0}, w_{1}, \dots, w_{k}) \in \mathbb{C}^{k+1} \setminus \{ \mathbf{0} \}$. Conventionally, we write
\[
P = [w_{0}, w_{1}, \dots, w_{k}],
\]
where $[w_{0}, w_{1}, \dots, w_{k}]$ are called the \textit{homogeneous coordinates} of $P$. Note that, for $\lambda \neq 0$,
\[
[w_{0}, w_{1}, \dots, w_{k}] = [\lambda w_{0}, \lambda w_{1}, \dots, \lambda w_{k}].
\]
The $\mathbb{CP}^{k}$ is a compact complex manifold without boundary. It has complex dimension $k$ and hence real dimension $2k$. For more details of $\mathbb{CP}^{k}$, see \cite[p.~15]{griffiths_harris}.

Now we decompose $\mathbb{CP}^{k}$ into $k+1$ subdomains $M_{j}$ for $0 \leq j \leq k$. In the following, $z_{j} = x_{j} + \sqrt{-1} y_{j} \in \mathbb{C}$, $x_{j} \in \mathbb{R}$, and $y_{j} \in \mathbb{R}$. We shall identify the complex number $z_{j}$ with the $2$-dimensional real vector $(x_{j}, y_{j})$. Let
\[
B^{2} (r) = \{ z \in \mathbb{C} \mid |z| \le r \} = \{ (x,y) \in \mathbb{R}^{2} \mid \| (x,y) \| \le r \}.
\]
This is a disk in $\mathbb{C} = \mathbb{R}^{2}$ with center $\mathbf{0}$ and radius $r$. For $0 \leq j \leq k$, define
\begin{equation}\label{eqn_CPk_domain}
D_{j} = \prod_{h=0, h \ne j}^{k} B^{2} (r) = \{ (z_{0}, \dots, \widehat{z_{j}}, \dots, z_{k}) \mid \forall h, z_{h} \in B^{2} (r) \} \subset \mathbb{C}^{k} = \mathbb{R}^{2k},
\end{equation}
where $\widehat{z_{j}}$ means $z_{j}$ is missing. Each $D_{j}$ is a polydisk with real dimension $2k$. We have the following $k+1$ diffeomorphisms
\begin{eqnarray*}
\phi_{j} \colon \ D_{j} & \rightarrow & M_{j} \subset  \mathbb{CP}^{k}  \\
(z_{0}, \dots, \widehat{z_{j}}, \dots, z_{k}) & \mapsto & [z_{0}, \dots, z_{j-1}, 1, z_{j+1}, \dots, z_{k}].
\end{eqnarray*}
To guarantee $\mathbb{CP}^{k} = \bigcup_{j=0}^{k} (M_{j} \setminus \partial M_{j})$, we have to let $r>1$. The larger $r$ is, the more overlapping there will be.

The transitions of coordinates are as follows. If $j<h$, then
\[
\phi_{h}^{-1} \circ \phi_{j} (z_{0}, \dots, \widehat{z_{j}}, \dots, z_{k}) = \tfrac{1}{z_{h}} (z_{0}, \dots, z_{j-1}, 1, z_{j+1}, \dots, \widehat{z_{h}}, \dots, z_{k})
\]
and
\[
\phi_{j}^{-1} \circ \phi_{h} (z_{0}, \dots, \widehat{z_{h}}, \dots, z_{k}) = \tfrac{1}{z_{j}} (z_{0}, \dots, \widehat{z_{j}}, \dots, z_{h-1}, 1, z_{h+1}, \dots, z_{k}).
\]

Second, equipping it with the classical \textit{Fubini-Study metric} (c.f.~\cite[p.~30]{griffiths_harris}), $\mathbb{CP}^{k}$ becomes a K\"{a}hler manifold with K\"{a}hler form
\[
\frac{\sqrt{-1}}{2} \partial \bar{\partial} \log \sum_{j=0}^{k} |w_{j}|^{2},
\]
where $[w_{0}, w_{1}, \dots, w_{k}]$ are the homogeneous coordinates of $\mathbb{CP}^{k}$. The \textit{Fubini-Study metric}, denoted by $\mathcal{H}$, is a Hermitian metric. On each $D_{j}$, it is expressed as
\[
\mathcal{H} = (1+ \|z\|^{2})^{-1} \sum_{\alpha = 0, \alpha \neq j}^{k} \mathrm{d} z_{\alpha} \otimes \mathrm{d} \bar{z}_{\alpha}
- (1+ \|z\|^{2})^{-2} \sum_{\alpha = 0, \alpha \neq j}^{k} \sum_{\beta = 0, \beta \neq j}^{k} \bar{z}_{\alpha} z_{\beta} \mathrm{d} z_{\alpha} \otimes \mathrm{d} \bar{z}_{\beta},
\]
where $z = (z_{0}, \dots, \widehat{z_{j}}, \dots, z_{k}) \in D_{j}$,
\[
\|z\|^{2} = \sum_{\alpha = 0, \alpha \neq j}^{k} |z_{\alpha}|^{2} = \sum_{\alpha = 0, \alpha \neq j}^{k} (x_{\alpha}^{2} + y_{\alpha}^{2}).
\]
We choose the Riemannian metric $g$ on $\mathbb{CP}^{k}$ as the real part of $\mathcal{H}$. This $g$ provides the underlying Riemannian structure of the above K\"{a}hler structure. The Laplacian is expressed as
\begin{align*}
\Delta v = & 2 \Delta_{\partial} v = 2 \Delta_{\bar{\partial}} v \\
= & 4 (1+ \|z\|^{2}) \left( \sum_{\alpha = 0, \alpha \neq j}^{k} \frac{\partial^{2} v}{\partial z_{\alpha} \partial \bar{z}_{\alpha}} + \sum_{\alpha = 0, \alpha \neq j}^{k} \sum_{\beta = 0, \beta \neq j}^{k} z_{\alpha} \bar{z}_{\beta} \frac{\partial^{2} v}{\partial z_{\alpha} \partial \bar{z}_{\beta}} \right) \\
= & (1+ \|z\|^{2}) \sum_{\alpha = 0, \alpha \neq j}^{k} \left( \frac{\partial^{2} v}{\partial x_{\alpha}^{2}} + \frac{\partial^{2} v}{\partial y_{\alpha}^{2}} \right) \\
& +(1+ \|z\|^{2}) \sum_{\alpha = 0, \alpha \neq j}^{k} \sum_{\beta = 0, \beta \neq j}^{k} \left[ (x_{\alpha} x_{\beta} + y_{\alpha} y_{\beta}) \left( \frac{\partial^{2} v}{\partial x_{\alpha} \partial x_{\beta}} + \frac{\partial^{2} v}{\partial y_{\alpha} \partial y_{\beta}} \right) \right. \\
& \left. + (x_{\alpha} y_{\beta} - y_{\alpha} x_{\beta}) \left( \frac{\partial^{2} v}{\partial x_{\alpha} \partial y_{\beta}} - \frac{\partial^{2} v}{\partial y_{\alpha} \partial x_{\beta}} \right) \right].
\end{align*}

Third, we consider the model problem \eqref{eqn_problem} with $b>0$. Choose constants $a_{j} \in \mathbb{R}$, $0 \leq j \leq k$. Choose the exact solution to \eqref{eqn_problem} as
\begin{equation}\label{eqn_u_cpk}
u([w_{0}, w_{1}, \dots, w_{k}]) = \sum_{j=0}^{k} a_{j} |w_{j}|^{2},
\end{equation}
where $[w_{0}, \dots, w_{k}]$ are homogeneous coordinates with normalization $\sum_{j=0}^{k} |w_{j}|^{2} =1$. It is easy to see that $u$ is well-defined. The $f$ in \eqref{eqn_problem} is then
\[
f = (4k+4+b) u - 4 \sum_{j=0}^{k} a_{j}.
\]
On $D_{j}$, the exact solution $u$ has the expression
\[
\hat{u}_{j} := u \circ \phi_{j} (z_{0}, \dots, \widehat{z_{j}}, \dots, z_{k}) = \frac{a_{j} + \sum_{\beta = 0, \beta \neq j}^{k} a_{\beta} |z_{\beta}|^{2}}{1 + \sum_{\beta = 0, \beta \neq j}^{k} |z_{\beta}|^{2}}.
\]

We perform numerical test on $\mathbb{CP}^{3}$. Choose $b$ in \eqref{eqn_problem} as $4$, choose  in \eqref{eqn_u_cpk}
\[
(a_{0}, a_{1}, a_{2}, a_{3}) = (1,2,-1,-2).
\]
Take $r=1.2$ in \eqref{eqn_CPk_domain}. Since $\mathbb{CP}^{3}$ is decomposed into $4$ subdomains, Algorithms \ref{alg_numerical_sequential} and \ref{alg_numerical_parallel} are essentially different in this case. Neither of them is reduced to Algorithm \ref{alg_numerical_alternating}. We apply Algorithm \ref{alg_numerical_sequential} first.

The network architecture is identical to that in $\S$\ref{subsec_B3S3}. The $\lambda_{BC}$ in \eqref{eqn_loss} is set to $0.997$. For each inner iteration, we sample a total of $2000$ random points, with $1000$ in the interior and $1000$ on the boundary. For each outer iterative step, the inner iteration runs $5000$ steps using the Adam optimizer. We perform five tests independently. The numerical results are presented in Table \ref{tab_CP3_s} and Figs.~\ref{fig_CP3_s_error} and \ref{fig_CP3_s_infty}. They show the $\hat{u}^{n}$ sequence approximates the exact solution well.

\begin{table}[ht]
\begin{center}
\begin{tabular}{|c|c|c|c|c|c|c|}
\hline
$n$ & $0$ & $5$ & $10$ & $15$ & $20$ & $100$ \\
\hline
Mean & $1.2793$ & $0.1563$ & $0.0508$ & $0.0170$ & $6.32e-3$ & $3.19e-4$ \\
\hline
Std Dev & $0.1111$ & $0.0565$ & $0.0181$ & $6.34e-3$ & $1.97e-3$ & $2.69e-5$ \\
\hline
\end{tabular}
\end{center}
\caption{Relative $L^{2}$-errors of $\hat{u}^{n}$ with respect to $\hat{u}$ on $\mathbb{CP}^{3}$ (Sequential DDM).}
\label{tab_CP3_s}
\end{table}

\begin{figure}[htbp]
\begin{center}
  \includegraphics[width=0.5\textwidth]{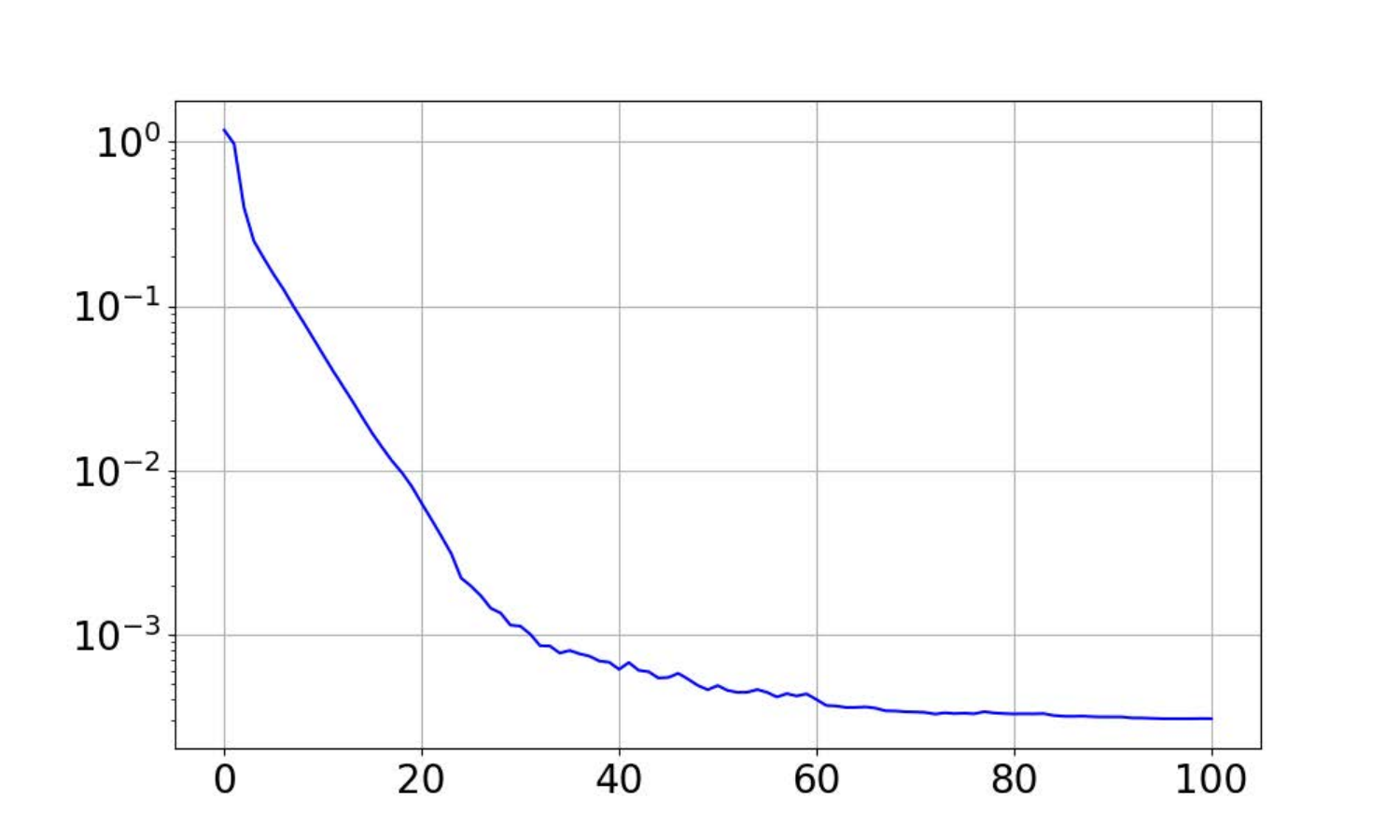}
\end{center}
  \caption{Relative $L^{2}$-convergence of $\hat{u}^{n}$ with respect to $\hat{u}$ on $\mathbb{CP}^{3}$ (Sequential DDM).}
  \label{fig_CP3_s_error}
\end{figure}

\begin{figure}[htbp]
\begin{center}
  \includegraphics[width=0.5\textwidth]{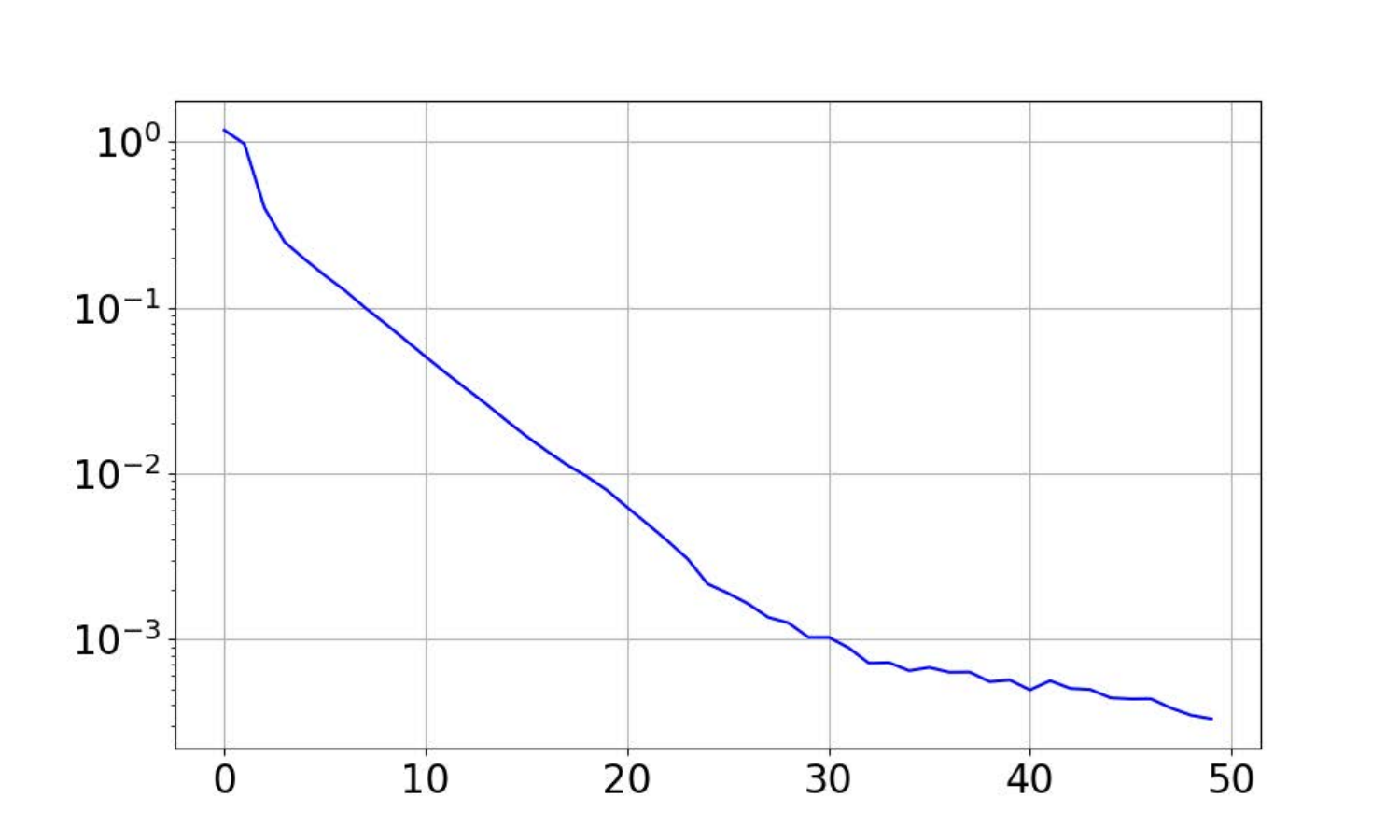}
\end{center}
  \caption{Relative $L^{2}$-convergence of $\hat{u}^{n}$ with respect to $\hat{u}^{\infty}$ on $\mathbb{CP}^{3}$ (Sequential DDM).}
  \label{fig_CP3_s_infty}
\end{figure}

\subsection{On \texorpdfstring{$\mathbb{CP}^{3}$: Parallel Domain Decomposition Method}{CP3}}
Finally, we apply Algorithm \ref{alg_numerical_parallel} to the above specific problem on $\mathbb{CP}^{3}$. Unlike Algorithm \ref{alg_numerical_sequential}, we now need a partition of unity $\{ \rho_{j} \mid 0 \leq j \leq 3 \}$ satisfying Assumption \ref{asp_partition_numerical}.

We first construct a general $\{ \rho_{j} \mid 0 \leq j \leq k \}$ satisfying Assumption \ref{asp_partition_numerical} on $\mathbb{CP}^{k}$. As mentioned before, it suffices to define the $\mu_{j}$ in \eqref{eqn_rho}, which is in turn reduced to the definition of $\mu_{j} \circ \phi_{j}$ on $D_{j}$. Choose $r' \in (1,r)$. Define
\[
\mu_{j} \circ \phi_{j} (z_{0}, \dots, \widehat{z_{j}}, \dots, z_{k}) =
\begin{cases}
0, & \max \{ |z_{h}| \mid 0 \leq h \leq k \} > r'; \\
\prod_{h=0,h \ne j}^{k} (1- (\tfrac{|z_{h|}}{r'})^{2})^{3}, & \text{otherwise}.
\end{cases}
\]
We see that $\mathrm{supp} \mu_{j} \subset M_{j} \setminus \partial M_{j}$, $\mu_{j}$ is $C^{2}$, and $\sum_{j=0}^{k} \mu_{j} >0$. Hence the resulted family of $\rho_{j}$'s are $C^{2}$ functions satisfying Assumption \ref{asp_partition_numerical}.

Let $r' = 0.1 r + 0.9$ and $k=3$. This yields a desired partition of unity. We keep the above settings for the network architecture, hyperparameters, sampling points, inner iterations, and so forth. The experiment is again repeated independently five times. The numerical results are presented in Table \ref{tab_CP3_p} and Figs.~\ref{fig_CP3_p_error} and \ref{fig_CP3_p_infty}.

\begin{table}[ht]
\begin{center}
\begin{tabular}{|c|c|c|c|c|c|c|}
\hline
$n$ & $0$ & $5$ & $10$ & $15$ & $20$ & $100$ \\
\hline
Mean & $1.2613$ & $0.1367$ & $0.0439$ & $0.0236$ & $0.0138$ & $3.21e-4$ \\
\hline
Std Dev & $0.0724$ & $0.0274$ & $0.0236$ & $0.0142$ & $8.53e-3$ & $1.53e-5$ \\
\hline
\end{tabular}
\end{center}
\caption{Relative $L^{2}$-errors of $\hat{u}^{n}$ with respect to $\hat{u}$ on $\mathbb{CP}^{3}$ (Parallel DDM).}
\label{tab_CP3_p}
\end{table}

\begin{figure}[htbp]
\begin{center}
  \includegraphics[width=0.5\textwidth]{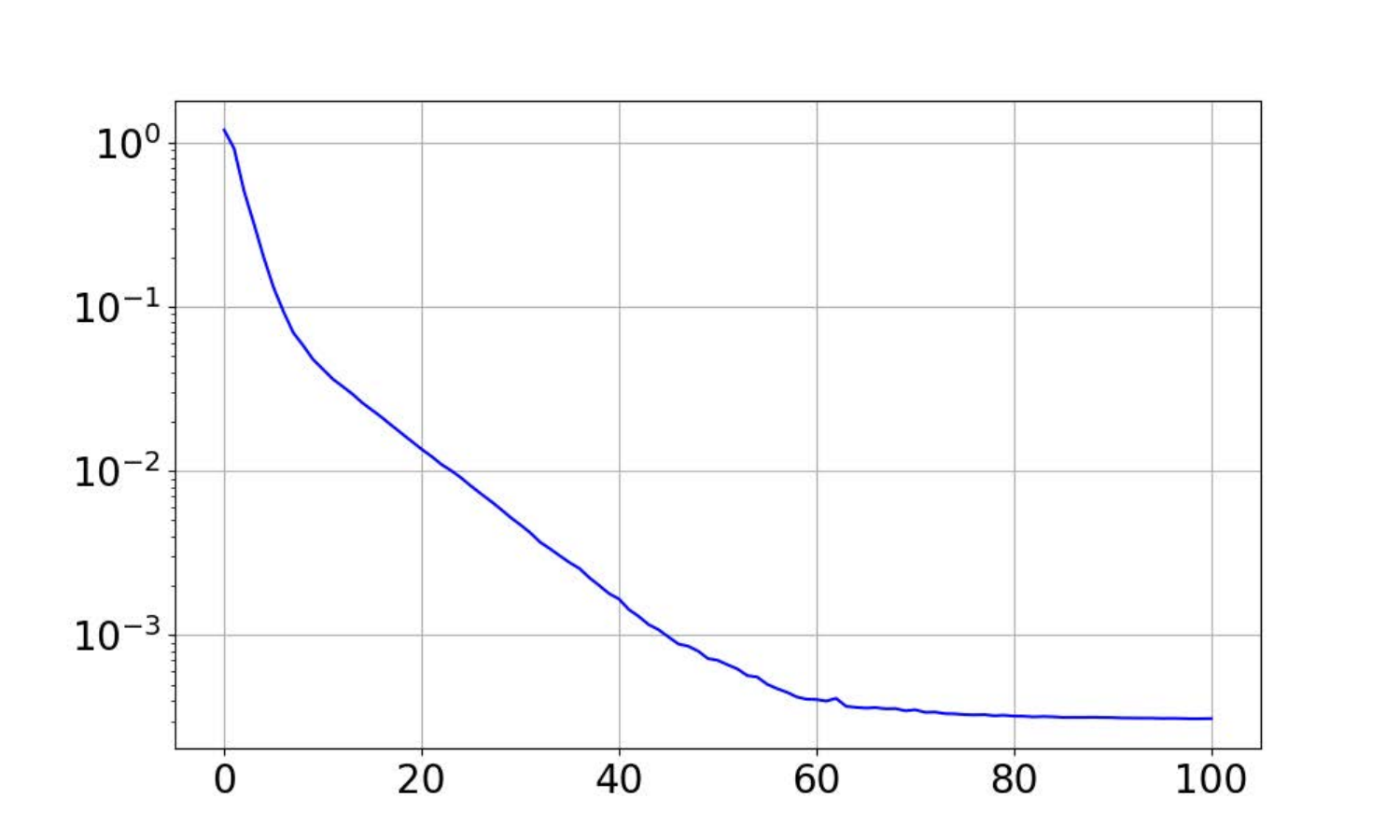}
\end{center}
  \caption{Relative $L^{2}$-convergence of $\hat{u}^{n}$ with respect to $\hat{u}$ on $\mathbb{CP}^{3}$ (Parallel DDM).}
  \label{fig_CP3_p_error}
\end{figure}

\begin{figure}[htbp]
\begin{center}
  \includegraphics[width=0.5\textwidth]{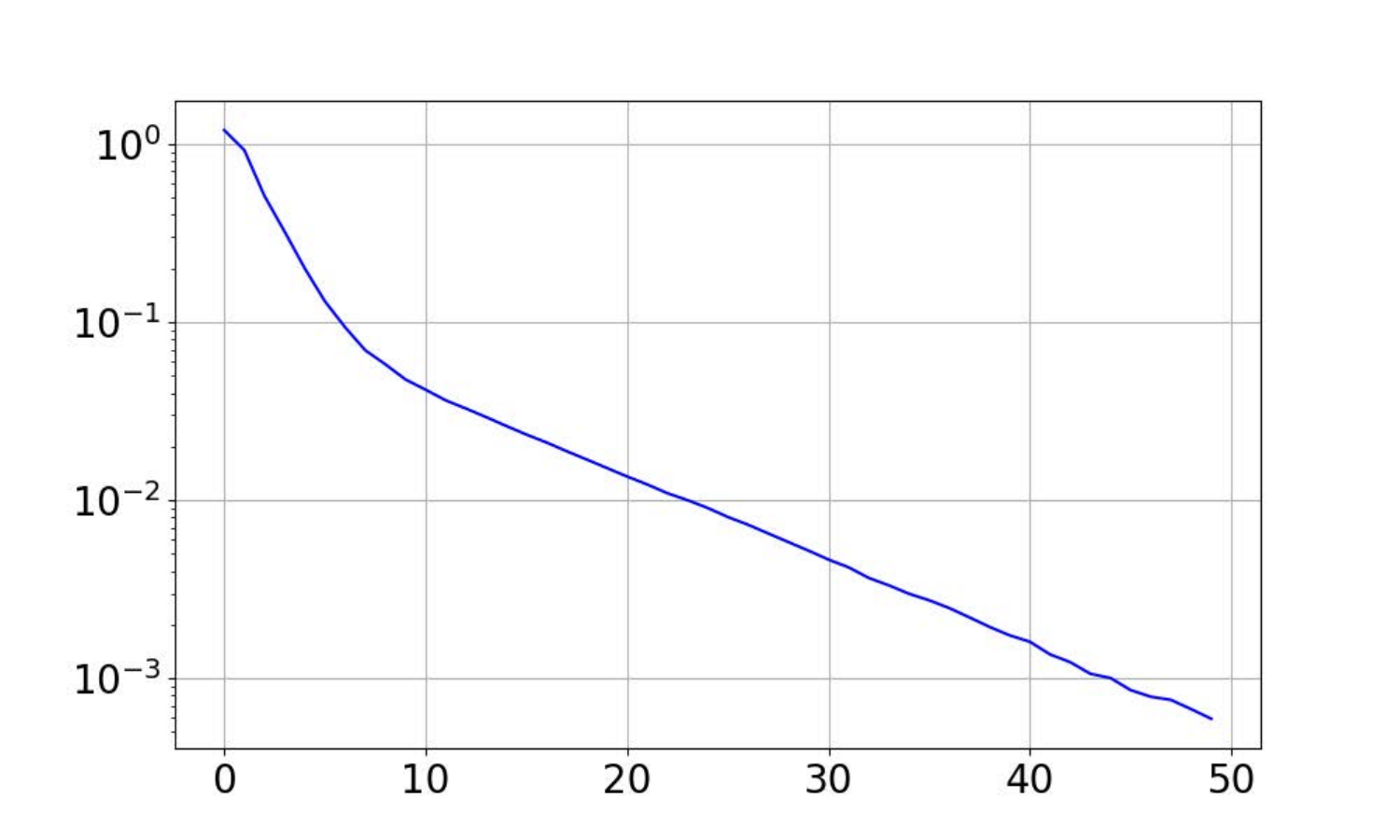}
\end{center}
  \caption{Relative $L^{2}$-convergence of $\hat{u}^{n}$ with respect to $\hat{u}^{\infty}$ on $\mathbb{CP}^{3}$ (Parallel DDM).}
  \label{fig_CP3_p_infty}
\end{figure}

In comparison, the parallel DDM solves the equation on $\mathbb{CP}^{3}$ with the same accuracy as the sequential DDM; however, the outer iteration of the sequential DDM converges faster than that of the parallel DDM.

\section*{Acknowledgements}
We thank Jingfei Chen, Jingrun Chen and Tao Zhou for various discussions. The second author was partially supported by NSFC 11871272. The third author was partially supported by NSFC 12371369.

\end{document}